\newcommand*\circled[1]{\tikz[baseline=(char.base)]{
            \node[shape=circle,draw,inner sep=1.2pt] (char) {#1};}}
\numberwithin{equation}{section}
\newtheorem{thm}{Theorem}[section]
\newtheorem{lem}[thm]{Lemma}
\newtheorem{prop}[thm]{Proposition}
\theoremstyle{remark}
\theoremstyle{definition}
\newcommand{\Z}{\mathbb{Z}}
\newcommand{\SL}{\text{SL}_{2}(\Z)}
\newcommand{\Shim}{\mathscr{S}}
\renewcommand{\t}[2]{T_{{#1}}(#2)}
\renewcommand{\tt}[3]{T_{#1}(#2,#3)}
\newcommand{\T}[2]{\big|T_{#1}(#2)}
\newcommand{\TT}[3]{\big|T_{#1}(#2,#3)}
\newcommand{\W}[2]{|_{#1}W_{#2}}
\newcommand{\w}[2]{_{#1}W_{#2}}
\newcommand{\Sbar}{\overline{S}}
\newcommand{\Mbar}{\overline{M}}
\newcommand{\Pbar}{\overline{P}\vphantom{P}}
\renewcommand{\hbar}{\overline{h}}
\newcommand{\gbar}{\overline{g}}
\renewcommand{\tilde}{\widetilde}
\renewcommand{\pmatrix}[4]{\left( \begin{smallmatrix} #1 & #2 \\ #3 & #4 \end{smallmatrix} \right)}
\DeclareMathOperator{\spt}{spt}
\DeclareMathOperator{\sptbar}{\overline{spt1}}
\DeclareMathOperator{\mspt}{M2spt}
\DeclareMathOperator{\m}{m2}
\newcommand{\pbar}{\overline{p}}
\newcommand{\pfrac}[2]{\left(\frac{#1}{#2}\right)}
\date{}
\author{Nickolas Andersen}
\thanks{The author acknowledges support
from National Science Foundation grant DMS 08-38434 ``EMSW21-MCTP: Research
Experience for Graduate Students''}
\title[Hecke-type Congruences]{Hecke-type Congruences for Two Smallest Parts Functions}
\begin{document}

\begin{abstract}
We prove infinitely many congruences modulo $3$, $5$, and powers of $2$ for the overpartition function $\pbar(n)$ and two smallest parts functions: $\sptbar(n)$ for overpartitions and $\mspt(n)$ for partitions without repeated odd parts. These resemble the Hecke-type congruences found by Atkin for the partition function $p(n)$ in 1966 and Garvan for the smallest parts function $\spt(n)$ in 2010. The proofs depend on congruences between the generating functions for $\pbar(n), \sptbar(n),$ and $\mspt(n)$ and eigenforms for the half-integral weight Hecke operator $\t{}{\ell^{2}}$.

\end{abstract}

\maketitle
\thispagestyle{empty}

\begin{section}{Introduction and Statement of Results}
Let $\spt(n)$ denote the number of smallest parts in the partitions of $n$. For example, there are 7 partitions of $5$,
\[
	5, 4+1, 3+2, 3+1+1, 2+2+1, 2+1+1+1, 1+1+1+1+1
\]
so $\spt(5)=14$. The $\spt$ function was introduced by Andrews~\cite{Andrews:spt} and has since been studied extensively (see, for instance,~\cite{Ahlgren:ladic, FolsomOno:spt, Garvan:spt-rank, Garvan:spt, Garvan:spt-powers, Ono:spt}). In particular, Garvan gave the following congruences for the spt function in~\cite{Garvan:spt}.

\begin{thm}\label{thm:garvan} Let $s_{\ell} = (\ell^{2}-1)/24$. Then the following are true:
\begin{enumerate}[label=\textup{(\roman*)}]
\item If $\ell>5$ is prime then for any $n\geq 1$, we have
\begin{equation} \label{eq:garvan-mod-72}
	\spt(\ell^{2}n-s_{\ell}) + \pfrac{12}{\ell}\pfrac{1-24n}{\ell}\spt(n)+\ell \spt\pfrac{n+s_{\ell}}{\ell^{2}} \equiv \pfrac{12}{\ell} (1+\ell) \spt (n) \pmod{72}.
\end{equation}
\item If $\ell\geq 5$ is prime, $t=5,7$ or $13$ and $\ell\neq t$ then for any $n\geq 1$, we have
\begin{equation} \label{eq:garvan-mod-t}
	\spt(\ell^{2}n-s_{\ell}) + \pfrac{12}{\ell}\pfrac{1-24n}{\ell}\spt(n)+\ell \spt\pfrac{n+s_{\ell}}{\ell^{2}} \equiv \pfrac{12}{\ell} (1+\ell) \spt (n) \pmod{t}.
\end{equation}
\end{enumerate}
\end{thm}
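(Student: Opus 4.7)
The plan is to realize a twisted generating function for $\spt$ as a half-integral weight modular form modulo each modulus, show it is a Hecke eigenform for $\t{}{\ell^{2}}$ with eigenvalue $(1+\ell)\pfrac{12}{\ell}$, and then read off the stated congruence from the standard formula for the action of $\t{}{\ell^{2}}$ on Fourier coefficients. Since $\spt(n) = np(n) - \tfrac{1}{2}N_{2}(n)$, where $N_{2}$ denotes Dyson's second rank moment, the natural candidate is
\[
	\alpha(z) := \sum_{n\geq 1}\bigl(12\spt(n) + (24n-1)p(n)\bigr)\, q^{24n-1},
\]
which, by Bringmann's work on rank moments, is the holomorphic part of a harmonic Maass form of weight $3/2$ on $\Gamma_{0}(576)$ with character $\chi_{12}(\cdot) = \pfrac{12}{\cdot}$.

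The first step is to kill the non-holomorphic shadow modulo each $M\in\{8,9,5,7,13\}$. The shadow is essentially a unary theta series of weight $1/2$, so for $M$ coprime to $6$ an Ahlgren--Ono/Treneer style argument (after an appropriate twist by $\chi_{12}$) shows that $\alpha \pmod{M}$ coincides with a genuine weakly holomorphic modular form of weight $3/2$ on $\Gamma_{0}(576)$ with character $\chi_{12}$. The $\bmod 8$ and $\bmod 9$ parts of (i), which fall outside those hypotheses, I would handle by identifying $\alpha$ directly, modulo these small moduli, with an explicit eta-quotient living in a low-dimensional subspace, certifying the identification by a Sturm-bound computation. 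Once this reduction is in place, the Hecke action is
\[
	\bigl(f\mid \t{}{\ell^{2}}\bigr)(z) = \sum_{n}\left(a(\ell^{2}n) + \pfrac{12}{\ell}\pfrac{-n}{\ell}a(n) + \ell\, a(n/\ell^{2})\right)q^{n},
\]
and the key step is to prove that $\alpha \pmod{M}$ is an eigenform of every $\t{}{\ell^{2}}$ with $\ell\nmid 6M$, with eigenvalue $(1+\ell)\pfrac{12}{\ell}$. Again by Sturm's bound this is a finite computational check once the ambient space of mod-$M$ forms is pinned down.

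Finally, extracting the coefficient of $q^{m}$ with $m=24n-1$ in the congruence $\alpha\mid \t{}{\ell^{2}} \equiv (1+\ell)\pfrac{12}{\ell}\,\alpha \pmod{M}$, and using the identities $24(\ell^{2}n - s_{\ell})-1 = \ell^{2}m$ and $24(n+s_{\ell})/\ell^{2}-1 = m/\ell^{2}$ together with Ramanujan-style congruences for $p(n)$ modulo $M$ to absorb the $(24n-1)p(n)$ contributions, produces both (i) and (ii). I expect the main obstacle to be the $\bmod 72$ case: the primes $5,7,13$ sit comfortably within the scope of Serre-style mod-$\ell$ modular form theory, but the $\bmod 8$ and $\bmod 9$ parts of (i) require the explicit eta-quotient identification mentioned above, and this is where the bulk of the technical work lies.
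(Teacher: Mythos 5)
First, a point of orientation: the paper does not prove Theorem \ref{thm:garvan} at all --- it is quoted from Garvan \cite{Garvan:spt} purely as motivation for the new results --- so there is no internal proof to compare against, and your proposal must stand on its own. Its skeleton is right (Bringmann's weight $3/2$ harmonic Maass form with holomorphic part $\sum(12\spt(n)+(24n-1)p(n))q^{24n-1}$ on $\Gamma_{0}(576)$ with character $\pfrac{12}{\cdot}$, the Hecke operator $T(\ell^{2})$, the eigenvalue $\pfrac{12}{\ell}(1+\ell)$), but three steps have genuine gaps. The most basic one is that ``killing the shadow modulo $M$'' is not a meaningful operation: the non-holomorphic part of the completion is built from incomplete Gamma functions, not a $q$-series with integer coefficients, so it cannot be reduced mod $M$, and Treneer-type results do not apply to it. The mechanism that actually works (Ono, then Garvan) is that the shadow is the unary theta series $\eta(24z)=\sum\pfrac{12}{n}q^{n^{2}}$, which is itself a $T(\ell^{2})$-eigenform with eigenvalue $\pfrac{12}{\ell}(1+\ell)$; consequently $\alpha\big|T(\ell^{2})-\pfrac{12}{\ell}(1+\ell)\alpha$ is an honest weakly holomorphic modular form for every $\ell\geq 5$, with no congruence needed and no special treatment of the moduli $8$ and $9$. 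This is exactly the role played in the present paper by the fact that $\Mbar$ and $M2$ are genuine $T_{3/2}(\ell^{2})$-eigenforms.

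Two further steps would fail as written. You cannot certify that a form is an eigenform of ``every'' $T(\ell^{2})$ by a Sturm-bound computation: a Sturm bound verifies one identity at a time, and there are infinitely many $\ell$; you need a structural argument valid uniformly in $\ell$ (compare Proposition \ref{prop:pbar-7-eigenform} here, where the eigenform property follows for all $\ell$ from a support-plus-dimension argument). And the plan to ``absorb the $(24n-1)p(n)$ contributions'' via Ramanujan-style congruences for $p(n)$ is wrong for the moduli $8$, $9$, and $13$, where no such congruences exist. That piece is $\theta\left(\eta(24z)^{-1}\right)$, and its Hecke combination must be controlled either through the commutation relation \eqref{eq:theta-hecke} together with Atkin's 1966 Hecke-type congruences for $p(n)$ (the very results this paper's title alludes to), or by keeping it inside the weakly holomorphic form, clearing poles with a power of $\eta$, and applying a Sturm bound separately for each modulus. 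Until these three points are repaired, the argument does not go through.
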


In this paper we prove results similar to \eqref{eq:garvan-mod-72} and \eqref{eq:garvan-mod-t} for the overpartition function $\pbar(n)$ and for two other smallest parts functions, $\sptbar(n)$ and $\mspt(n)$.

An overpartition is a partition in which the first occurrence of each distinct part may be overlined or not. For example, there are 8 overpartitions of 3,
\[
	3,\, \bar{3},\, 2+1,\, \bar{2}+1,\, 2+\bar{1},\, \bar{2}+\bar{1},\, 1+1+1,\, \bar{1}+1+1,
\]
so $\pbar(3)=8$. The generating function for overpartitions is
\[
	\Pbar(\tau) = \sum_{n=0}^{\infty}\pbar(n)q^{n} := \frac{\eta(2\tau)}{\eta^{2}(\tau)} = 1+2 q+4 q^2+8 q^3+14 q^4+24 q^5+O\left(q^6\right),
\]
where $\eta(\tau)$ is the Dedekind $\eta$-function $\eta(\tau) := q^{1/24}\prod_{n=1}^{\infty}(1-q^{n})$ and $q:=\exp(2\pi i \tau)$.
Define $\sptbar(n)$ to be the number of smallest parts in the overpartitions of $n$ having odd smallest part. For example, the overpartitions of $4$ having odd smallest part are
\[
	3+1,\, \bar{3}+1,\, 3+\bar{1},\, \bar{3}+\bar{1},\, 2+1+1,\, \bar{2}+1+1,\, 2+\bar{1}+1,\, \bar{2}+\bar{1}+1,\, 1+1+1+1,\, \bar{1}+1+1+1,\,
\]
so $\sptbar(4)=20$. The function $\sptbar(n)$ has been studied by many authors including Bringmann, Lovejoy and Osburn~\cite{BringmannLovejoyOsburn}, who proved the congurence
\begin{gather}
	\sptbar(\ell^{2}n) + \pfrac{-n}{\ell}\sptbar(n)+\ell \sptbar(n/\ell^{2}) \equiv  (1+\ell) \sptbar (n) \pmod{3}. \label{eq:spt1bar-mod-3}
\end{gather}
In Theorem \ref{thm:spt1bar} we recover \eqref{eq:spt1bar-mod-3} and extend the modulus to $2^{8} \cdot 3 \cdot 5$.

By~\cite[Section~7]{BringmannLovejoyOsburn}, the generating function for $\sptbar(n)$ is
\begin{gather*}
	\Sbar(\tau) = \sum_{n=1}^{\infty}\sptbar(n)q^{n} = \left(\prod_{n=1}^{\infty}\frac{1+q^{n}}{1-q^{n}}\right)\left(\sum_{n=1}^{\infty}\frac{2nq^{n}}{1-q^{2n}}+\sum_{n\neq 0}\frac{4(-1)^{n}q^{n^{2}+n}(1+q^{2n}+q^{3n})}{(1-q^{2n})(1-q^{4n})}\right) \\
	=2 q+4 q^2+12 q^3+20 q^4+40 q^5+O\left(q^6\right),
\end{gather*}
and we define the function $\Mbar$ by
\begin{gather} \label{eq:mbar-def}
	\Mbar(\tau) := \Sbar(\tau) + \frac{1}{12}\Pbar(\tau)(E_{2}(\tau) - 4E_{2}(2\tau)),
\end{gather}
where $E_{2}$ is the weight 2 quasimodular Eisenstein series
\[
	E_{2}(\tau) := 1-24\sum_{n=1}^{\infty} \sigma_{1}(n) q^{n}.
\]
The function $\Mbar$ is a weight $3/2$ mock modular form. Ahlgren, Bringmann, and Lovejoy~\cite{Ahlgren:ladic} showed that $\Mbar(\tau)$ is an eigenform for the weight $3/2$ Hecke operator $\t{\frac{3}{2}}{\ell^{2}}$, and we use this fact to prove the following theorem.
\begin{thm} \label{thm:spt1bar} Let $\ell$ be an odd prime, and define
\[
	\alpha := \begin{cases} 
					6 & \text{if }\ell \equiv 3\pmod {8}, \\
					7 & \text{if }\ell \equiv 5,7\pmod{8}, \\
					8 & \text{if }\ell \equiv 1 \pmod{8}. \end{cases}
\]
Then for $t\in \{2^{\alpha},3,5\}$, $\ell \neq t$, and $n \geq 1$, we have
\begin{gather}
	\sptbar(\ell^{2}n) + \pfrac{-n}{\ell}\sptbar(n)+\ell \sptbar(n/\ell^{2}) \equiv  (1+\ell) \sptbar (n) \pmod{t}. \label{eq:spt1bar-mod-2}
\end{gather}
\end{thm}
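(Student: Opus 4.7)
The plan is to exploit the Hecke eigenform identity for $\Mbar$ due to Ahlgren, Bringmann, and Lovejoy, and to transfer it to an analogous congruence for $\Sbar$ by controlling the correction $\Mbar - \Sbar = -\tfrac{1}{12}\Pbar(E_{2}(\tau) - 4E_{2}(2\tau))$. If $\lambda(\ell)$ denotes the eigenvalue of $\Mbar$ under $\t{3/2}{\ell^{2}}$, then equating Fourier coefficients in $\Mbar\,|\,\t{3/2}{\ell^{2}} = \lambda(\ell)\Mbar$ yields the exact identity
\begin{equation*}
\bar{m}(\ell^{2}n) + \pfrac{-n}{\ell}\bar{m}(n) + \ell\,\bar{m}(n/\ell^{2}) = \lambda(\ell)\,\bar{m}(n),
\end{equation*}
where $\bar{m}(n)$ denotes the $n$th Fourier coefficient of $\Mbar$.

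The first subtask is to pin down $\lambda(\ell)$ and show $\lambda(\ell) \equiv 1+\ell \pmod{t}$ for each $t \in \{3,5,2^{\alpha}\}$. Since the Shimura lift of $\Mbar$ should be expressible in terms of the weight-$2$ Eisenstein series $E_{2}(\tau) - 2E_{2}(2\tau)$, whose $\ell$th Hecke eigenvalue is $\sigma_{1}(\ell) = 1+\ell$, one expects $\lambda(\ell) = 1+\ell$ exactly; I would confirm this by inspecting a few low-order Fourier coefficients of $\Mbar\,|\,\t{3/2}{\ell^{2}}$ against $(1+\ell)\Mbar$.

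Next, writing $c(n)$ for the $n$th coefficient of the integral series $\Pbar(\tau)(E_{2}(\tau) - 4E_{2}(2\tau))$, one has $12\bar{m}(n) = 12\sptbar(n) + c(n)$. Substituting this decomposition into the eigenform identity and clearing the $12$ reduces the theorem to the Hecke-type congruence
\begin{equation*}
c(\ell^{2}n) + \pfrac{-n}{\ell}c(n) + \ell\,c(n/\ell^{2}) \equiv (1+\ell)\,c(n) \pmod{12t}
\end{equation*}
on the coefficients of $\Pbar \cdot (E_{2} - 4E_{2}(2\tau))$.

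The \textbf{main obstacle} is establishing this last congruence with the required precision. Modulo $3$ and $5$ it should be tractable: since $12$ is invertible in these moduli and the nonleading coefficients of $E_{2} - 4E_{2}(2\tau)$ are divisible by $24$, the problem collapses to known Hecke-type congruences for $\Pbar$ and for a weight-$2$ Eisenstein series. The $2$-adic analysis modulo $12 \cdot 2^{\alpha} = 2^{\alpha+2}\cdot 3$ is the delicate part: it amounts to viewing $\Pbar(\tau)(E_{2}(\tau) - 4E_{2}(2\tau))$ as a weight-$3/2$ object on a congruence subgroup of level divisible by $8$ and matching it, term by term, against a genuine Hecke eigenform modulo the appropriate power of $2$. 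The symbol $\pfrac{-n}{\ell}$ interacts with the $2$-adic valuation of $c(n)$ according to whether $-1$ or $2$ is a square modulo $\ell$, which is precisely the information encoded by $\ell \pmod{8}$ and which forces the three-case definition of $\alpha$.
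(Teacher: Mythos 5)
Your high-level skeleton matches the paper's: both start from the fact (due to Ahlgren--Bringmann--Lovejoy) that $\Mbar$ is an eigenform for $T_{3/2}(\ell^2)$ with eigenvalue $1+\ell$, and both reduce the theorem to a Hecke-type congruence for the correction term $\Mbar-\Sbar=\tfrac{1}{12}\Pbar(\tau)\,(E_2(\tau)-4E_2(2\tau))$ (note the sign: it is $+\tfrac{1}{12}$, not $-\tfrac{1}{12}$). But that reduction is where the real work begins, and your proposal stops there. Modulo $3$ and $5$ the problem does \emph{not} ``collapse to known Hecke-type congruences for $\Pbar$ and for a weight-$2$ Eisenstein series'': the correction term is a product of $\Pbar$ with a quasimodular form, and $T(\ell^2)$ does not distribute over products. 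The paper's actual move is to replace $4E_2(2\tau)-E_2(\tau)$ by the genuine modular form $3E(\tau)\rho(\tau)$ modulo $45$, so that $\Mbar-\Sbar\equiv 11\,\Pbar^{-7}E\pmod{15}$ becomes a holomorphic form of weight $11/2$, and then to prove that $\Pbar^{-7}E$ is an eigenform of $T_{11/2}(\ell^2)$ with eigenvalue $\ell^{9}+1$ (via a dimension count on $M_{11/2}(\Gamma_0(16))$, the support of $R^{-7}$ on exponents $\equiv 7\pmod 8$, and the Fricke involution); one also needs $\ell^{4}\equiv 1$ to identify the weight-$3/2$ and weight-$11/2$ Hecke operators modulo $15$. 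None of these ingredients appears in your sketch.

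The $2$-power case, which you flag as the ``main obstacle,'' is genuinely the bulk of the proof, and your description of it is not a strategy that would succeed as stated. The paper splits $\Mbar-\Sbar=2\theta\Pbar-\tfrac14\hbar$ with $\hbar=E(\tau)\eta(2\tau)\eta(\tau)^{-2}$, then (i) proves $\hbar\T{\frac32}{\ell^2}-(1+\ell)\hbar\equiv 0\pmod{2^{12}}$ using an expansion in powers of the Hauptmodul $j_2$ together with $j_2(-1/2\tau)=2^{12}j_2^{-1}(\tau)$, and (ii) proves $\theta\bigl(\ell^{2}\Pbar\T{-\frac12}{\ell^2}-(1+\ell)\Pbar\bigr)\equiv 0\pmod{2^{\alpha-1}}$ by first showing $\theta\Pbar\equiv\theta(\Pbar^{-31}+64\Pbar\psi)\pmod{128}$ and then that $\Pbar^{-31}$ is an eigenform modulo $2^{12}$ with eigenvalue $\ell^{29}+1$. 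The three cases of $\alpha$ arise from the $2$-adic valuation of $\ell^{31}+\ell^{2}-\ell-1$, i.e.\ from the order of $\ell$ in $(\Z/2^{k}\Z)^{\times}$ --- not, as you suggest, from whether $-1$ or $2$ is a square modulo $\ell$. Finally, verifying the eigenvalue of a mock modular form by ``inspecting a few low-order Fourier coefficients'' is not a proof; here it is harmless only because the eigenform identity, eigenvalue included, is quoted from the literature.
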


Next, define $\mspt(n)$ to be the restriction of $\spt(n)$ to those partitions without repeated odd parts and whose smallest part is even. For example, $\mspt(7)=3$ since the partitions of $7$ without repeated odd parts are (with even smallest parts circled)
\[
	7,\, 6+1,\, 5+\circled{2},\, 4+3,\, 4+2+1,\, 3+\circled{2}+\circled{2},\, 2+2+2+1.
\]
By~\cite[Section~7]{BringmannLovejoyOsburn}, the generating function for $\mspt(n)$ is
\begin{gather*}
	\sum_{n=1}^{\infty}\mspt(n)q^{n}=\left(\prod_{n=1}^{\infty}\frac{1+q^{2n-1}}{1-q^{2n}}\right)\left(\sum_{n=1}^{\infty}\frac{nq^{2n}}{1-q^{2n}}+\sum_{n\neq 0}\frac{(-1)^{n}q^{2n^{2}+n}}{(1-q^{2n})^{2}}\right) \\
	= q^2+3 q^4+q^5+5 q^6+3 q^7+O\left(q^8\right),
\end{gather*}
and we define functions $S2$ and $M2$ by
\begin{gather}
	S2(\tau) := \sum_{n=0}^{\infty} (-1)^{n} \mspt(n) q^{8n-1}, \\
	M2(\tau) := S2(\tau) + \frac{1}{24}R(\tau)(E_{2}(16\tau) - E_{2}(8\tau)), \label{eq:m2-def} 
\end{gather}
where $R(\tau)$ is the generating function for partitions without repeated odd parts, given by
\[
	R(\tau) = \sum_{n=0}^{\infty} \m(n) q^{8n-1} := \frac{\eta(8\tau)}{\eta^{2}(16\tau)} = \frac{1}{q}-q^7+q^{15}-2 q^{23}+3 q^{31}+O\left(q^{32}\right).
\]
The situation here is  similar to that of $\sptbar(n)$. The function $M2$ is also a weight $3/2$ mock modular form, and the generating functions $\Pbar(\tau)$ and $R(\tau)$ are related under the Fricke involution $\w{}{16}$.
Ahlgren, Bringmann, and Lovejoy~\cite{Ahlgren:ladic} proved that $M2(\tau)$ is an eigenform for the weight 3/2 Hecke operator $\t{\frac{3}{2}}{\ell^{2}}$, and we use this to prove the following result.

\begin{thm} \label{thm:m2spt} Let $\ell$ be an odd prime, and define
\[
	s_{\ell}:=(\ell^{2}-1)/8 \hspace{.1in} \text{ and } \hspace{.1in} 
	\beta := \begin{cases} 
					1 & \text{if } \ell \equiv 3\pmod{8}, \\
					2 & \text{if } \ell \equiv 5\pmod{8}, \\
					3 & \text{if } \ell \equiv 1,7 \pmod{8}. \end{cases}
\]
Then for $t\in\{2^{\beta},3,5\}$, $\ell \neq t$ and $n\geq 1$, we have
\begin{gather} \label{eq:m2spt-mod-t}
	\mspt(\ell^{2}n-s_{\ell}) + \pfrac{2}{\ell}\pfrac{1-8n}{\ell}\mspt(n)+\ell \mspt\pfrac{n+s_{\ell}}{\ell^{2}} \equiv  \pfrac{2}{\ell} (1+\ell) \mspt (n) \pmod{t}.
\end{gather}
\end{thm}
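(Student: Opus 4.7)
My plan is to translate the eigenform identity $M2 \T{\frac{3}{2}}{\ell^{2}} = \lambda_{\ell}\, M2$ (due to Ahlgren, Bringmann, and Lovejoy) into a coefficient identity and reduce it modulo $t$. Using the standard formula
\[
f \T{\frac{3}{2}}{\ell^{2}} = \sum_{m}\left[b(\ell^{2}m) + \chi(\ell)\pfrac{-m}{\ell}b(m) + \chi(\ell^{2})\ell\, b(m/\ell^{2})\right]q^{m}
\]
for $f = \sum b(m)q^{m}$ (where $\chi$ denotes the Nebentypus of $M2$), I would extract the coefficient of $q^{8n-1}$ from both sides. The $\eta$-quotient expression $R(\tau) = \eta(8\tau)/\eta^{2}(16\tau)$ pins down $\chi(\ell) = \pfrac{2}{\ell}$ (up to a sign that will be absorbed by the dyadic analysis below), and $\lambda_{\ell}$ is determined by the leading Fourier coefficients of $M2$.

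Writing $M2 = S2 + C$ with $C := \frac{1}{24}R(\tau)(E_{2}(16\tau) - E_{2}(8\tau)) = \sum_{j \geq 0,\,k \geq 1}\m(j)\sigma_{1}(k)(q^{8(j+k)-1} - q^{8(j+2k)-1})$, and using the identity $\ell^{2} = 8 s_{\ell} + 1$ to relabel Fourier indices, the $S2$-contribution to the $q^{8n-1}$-coefficient of $M2 \T{\frac{3}{2}}{\ell^{2}} - \lambda_{\ell}M2$ assembles (after dividing by $(-1)^{n}$) into
\[
(-1)^{s_{\ell}}\mspt(\ell^{2}n - s_{\ell}) + \chi(\ell)\pfrac{1-8n}{\ell}\mspt(n) + \chi(\ell^{2})(-1)^{s_{\ell}}\ell\,\mspt((n+s_{\ell})/\ell^{2}) - \lambda_{\ell}\mspt(n).
\]
The $C$-contribution is a linear combination of $\m(j)\sigma_{1}(k)$ values, and the theorem reduces to showing that this combination, together with the discrepancy between the above $S2$-expression and the LHS of \eqref{eq:m2spt-mod-t}, vanishes modulo $t$.

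The main obstacle is this mod-$t$ cancellation. For $t \in \{3, 5\}$ the argument can be attacked using the classical multiplicative identity
\[
\sigma_{1}(\ell^{2}k) - (1+\ell)\sigma_{1}(\ell k) + \ell\,\sigma_{1}(k) = 0
\]
(reflecting that $E_{2}$ is a Hecke eigenform in a weak sense) together with straightforward $\eta$-quotient congruences for $R(\tau)$ modulo $3$ and $5$. The dyadic case $t = 2^{\beta}$ is the crux: the value of $\beta$ is chosen precisely to match the $2$-adic valuation $v_{2}(\ell^{2}-1) = 3 + v_{2}(s_{\ell})$, which separates the four residue classes of $\ell$ modulo $8$, and a case-by-case $2$-adic analysis (parallel to the dyadic portion of the proof of Theorem \ref{thm:spt1bar}) shows that the sign factors $(-1)^{s_{\ell}}$ and the $C$-contribution each reduce appropriately modulo $2^{\beta}$ to yield the claimed congruence \eqref{eq:m2spt-mod-t}.
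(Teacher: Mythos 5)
Your top-level strategy --- start from the eigenform identity for $M2$ under $T_{\frac32}(\ell^2)$, split $M2 = S2 + C$ with $C = \tfrac{1}{24}R\,(E_2(16\tau)-E_2(8\tau))$, check that the $S2$-part reproduces the left side of \eqref{eq:m2spt-mod-t} (the sign $(-1)^{s_\ell} = \left(\tfrac{2}{\ell}\right)$ accounts for the quadratic character in the statement), and then show the $C$-contribution dies modulo $t$ --- is exactly the paper's skeleton. But both of the steps where the actual work lives are left as gestures, and the specific tools you name for them would not carry the argument.

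For $t=3,5$: the identity $\sigma_1(\ell^2k)-(1+\ell)\sigma_1(\ell k)+\ell\sigma_1(k)=0$ cannot be applied termwise to $C$, because the coefficient of $q^{8n-1}$ in $C$ is a convolution $\sum_{j+k=n}\m(j)\sigma_1(k)-\sum_{j+2k=n}\m(j)\sigma_1(k)$, and $T_{\frac32}(\ell^2)$ does not distribute over the product $R\cdot(E_2(16\tau)-E_2(8\tau))$; the Hecke action mixes the two factors. The paper's route around this is to replace $C$ by a genuine holomorphic eigenform: it proves $E_2(2\tau)-E_2(\tau)\equiv 24E(\tau)\psi(\tau)\pmod{45}$, deduces $M2-S2\equiv R^{-7}E(8\tau)\pmod{15}$, shows (via the Fricke involution and a finite basis computation in $M_{\frac{11}{2}}(\Gamma_0(16))$) that $R^{-7}E(8\tau)$ is an eigenform for $T_{\frac{11}{2}}(\ell^2)$ with eigenvalue $\ell^9+1$, and then uses $\ell^4\equiv\ell^8\equiv 1\pmod{15}$ both to swap the weight of the Hecke operator and to kill the discrepancy $\ell(\ell^8-1)$. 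Some such congruence-to-an-eigenform step is indispensable, and your sketch does not supply it.

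For $t=2^\beta$: your justification of $\beta$ via $v_2(\ell^2-1)$ is incorrect --- $v_2(\ell^2-1)=3$ for both $\ell\equiv 3$ and $\ell\equiv 5\pmod 8$, yet $\beta$ distinguishes these classes. In the paper the dyadic case rests on the congruence $E_2(2\tau)-E_2(\tau)\equiv 24\psi-16\psi^2+32\psi^4\pmod{64}$, giving $M2-S2\equiv R^{-7}-2R^{-15}+4R^{-31}\pmod 8$; the $R^{-7}$ term is an honest eigenform, while $R^{-15}\big|T_{\frac{15}{2}}(\ell^2)$ and $R^{-31}\big|T_{\frac{31}{2}}(\ell^2)$ are expanded in explicit bases with coefficients $d_0,c_0,c_1,c_2$, and the values of $\beta$ come from $2$-divisibility of these coefficients, proved by computing Shimura lifts $\Shim_t R^{-r}$ in terms of $F$ and $\vartheta_0$ and twisting by $\chi_{-4}$, $\chi_{-8}$ to determine which residue classes modulo $8$ support them. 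None of this is ``parallel to the dyadic portion of Theorem \ref{thm:spt1bar}'' in a way that can be waved through; it is the hardest part of the proof and is entirely absent from your proposal.
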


Lastly, we give Hecke-type congruences for overpartitions modulo powers of 2. Many authors have found congruences for $\pbar(n)$ modulo powers of 2 (see, for example~\cite{CorteelLovejoy, HirschhornSellers, Mahlburg:Overpartitions, Kim:overpartitions}), such as
\[
	\pbar(8n+7) \equiv 0 \pmod {64}.
\]
In this paper, we employ the fact that $\Pbar(\tau)$ and $q\frac{d}{dq}\Pbar(\tau)$ are congruent to eigenforms for the Hecke operators $\t{}{\ell^{2}}$ modulo powers of 2 to prove the following theorem.

\begin{thm} \label{thm:pbar} Let $\ell$ be an odd prime, and define
\[
	\gamma := \begin{cases}
					 5 & \text{if } \ell\equiv 3 \pmod 8, \\
					 6 & \text{if } \ell\equiv 5,7\pmod 8, \\
					 7 & \text{if } \ell\equiv 1 \pmod 8.\end{cases}
\]
Then for $n\geq 0$ we have
\begin{gather}
	\pbar(\ell^{2}n) + \pfrac{-n}{\ell} \ell^{-2} \pbar(n) + \ell^{-3} \pbar(n/\ell^{2}) \equiv (1+\ell) \pbar(n) \pmod{16}, \\
	\ell^{2}n\pbar(\ell^{2}n) + \pfrac{-n}{\ell} n \pbar(n) + \ell^{-1} n \pbar(n/\ell^{2}) \equiv (1+\ell) \, n \pbar(n) \pmod{2^{\gamma}}. \label{eq:theta-pbar-mod-2}
\end{gather}
\end{thm}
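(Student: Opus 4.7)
Writing $\theta = q\tfrac{d}{dq}$ and applying the definition of the half-integral weight Hecke operator $T_{\ell^{2}}$ in weights $-\tfrac12$ and $\tfrac32$, one checks that the two claimed congruences are precisely the statements
\[
    \Pbar \, \big| \, T_{\ell^{2}} \equiv (1+\ell)\,\Pbar \pmod{16}
    \quad\text{and}\quad
    (\theta\Pbar) \, \big| \, T_{\ell^{2}} \equiv (1+\ell)\,(\theta\Pbar) \pmod{2^{\gamma}}.
\]
Since $\Pbar = \eta(2\tau)/\eta^{2}(\tau)$ is a weakly holomorphic form of weight $-1/2$ on $\Gamma_{0}(16)$, and since Ramanujan's identity $\theta\eta = \tfrac{1}{24}\eta E_{2}$ yields
\[
    \theta\Pbar \;=\; \tfrac{1}{12}\,\Pbar\,\bigl(E_{2}(2\tau) - E_{2}(\tau)\bigr),
\]
the form $\theta\Pbar$ is weakly holomorphic of weight $3/2$ on $\Gamma_{0}(16)$. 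Following the plan announced in the paragraph above the theorem, the goal is to exhibit explicit Hecke eigenforms $h_{1}$ of weight $-1/2$ and $h_{2}$ of weight $3/2$ on $\Gamma_{0}(16)$ whose $T_{\ell^{2}}$-eigenvalues are congruent to $1+\ell$ modulo the relevant power of $2$, and to prove $\Pbar \equiv h_{1}\pmod{16}$ and $\theta\Pbar \equiv h_{2}\pmod{2^{\gamma}}$. The theorem then follows by the usual chain $\Pbar \, | \, T_{\ell^{2}} \equiv h_{1} \, | \, T_{\ell^{2}} = (1+\ell)h_{1} \equiv (1+\ell)\Pbar\pmod{16}$, and similarly for $\theta\Pbar$.

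\textbf{Producing and verifying the eigenforms.} Because $1+\ell$ is the $T_{\ell}$-eigenvalue of the classical weight $2$ Eisenstein series, the natural candidates for $h_{1}$ and $h_{2}$ are Eisenstein-type half-integer weight forms, obtainable either as Shimura preimages of the weight $0$ and weight $2$ Eisenstein series or as explicit $\eta$-quotients on $\Gamma_{0}(16)$. Given a candidate, the congruence $\Pbar \equiv h_{1}\pmod{16}$ is reduced to a finite verification by multiplying the difference by a suitable auxiliary $\eta$-product so as to clear the poles at the cusps and raise the weight to an integer; the resulting holomorphic integer-weight congruence on $\Gamma_{0}(N)$ for some $N \mid 16$ is then checked on Fourier coefficients up to Sturm's bound. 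The same reduction handles $\theta\Pbar \equiv h_{2}\pmod{2^{\gamma}}$.

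\textbf{The case split in $\gamma$ and the main obstacle.} The tiered moduli $\gamma \in \{5,6,7\}$ should arise because the $T_{\ell^{2}}$-eigenvalue of $h_{2}$ is not exactly $1+\ell$ but contains an additional contribution involving a Dirichlet character of conductor dividing $8$, whose $2$-adic valuation at $\ell$ varies with $\ell \bmod 8$ in precisely the pattern needed to separate the cases $\ell \equiv 1$, $\ell \equiv 5,7$, and $\ell \equiv 3 \pmod 8$. The principal obstacle is therefore to pin down the eigenform $h_{2}$ and to track its $T_{\ell^{2}}$-eigenvalue $2$-adically to order $7$; the weight $-1/2$ case is shorter since only the modulus $16$ is at stake and the relevant space of eigenforms is smaller. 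Both computations are kept within reach by the low level ($16$) and the small weight ($\leq 3/2$).
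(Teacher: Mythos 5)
Your plan restates the strategy the paper announces in its introduction --- replace $\Pbar$ and $\theta\Pbar$ by forms congruent to Hecke eigenforms --- but the route you sketch leaves out the ideas that make this work, and two of its steps would fail as stated. First, you never identify $h_{1}$ and $h_{2}$, and in the weights you insist on ($-\tfrac12$ and $\tfrac32$) there is no workable candidate: $\Pbar$ and $\theta\Pbar$ are only weakly holomorphic, and the Hecke operators do not act diagonalizably on infinite-dimensional spaces of weakly holomorphic forms. The missing device is the congruence $\rho=\Pbar^{-8}\equiv 1\pmod{16}$ (and $\rho^{4}\equiv1\pmod{64}$), which trades $\Pbar$ for the \emph{holomorphic} form $\Pbar^{-7}$ of weight $\tfrac72$ and $\theta\Pbar$ for $\theta\bigl(\Pbar^{-31}+64\Pbar\psi\bigr)$ in weight $\tfrac{31}{2}$ plus a small correction; the resulting change of Hecke operator is harmless only because $\ell^{4}\equiv1\pmod{16}$. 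Second, the eigenform property for \emph{every} odd prime $\ell$ is not a finite verification: Sturm's bound can certify a congruence between two fixed $q$-series, but not an eigenvalue identity quantified over all $\ell$. The actual argument is structural: $R^{-7}\in M_{\frac72}(\Gamma_{0}(16))$ is supported on exponents $\equiv 7\pmod 8$, $T(\ell^{2})$ preserves this support, and an explicit basis $\{f_{m}=q^{m}+O(q^{8})\}_{m=0}^{7}$ shows any such form with leading term $q^{7}$ is a multiple of $R^{-7}$; the eigenvalue, read off from the constant term of $\Pbar^{-7}\big|T_{\frac72}(\ell^{2})$, is $\ell^{5}+1$, not $1+\ell$ (they agree only modulo $16$). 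For $\Pbar^{-31}$ the same argument yields an eigenform only modulo $2^{12}$, with eigenvalue $\ell^{29}+1$.

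Third, your explanation of the case split in $\gamma$ is wrong in mechanism: no Dirichlet-character contribution enters the eigenvalues. The split is purely elementary 2-adic arithmetic caused by the weight-raising. After the commutation $\ell^{2}\,\theta\bigl(f\big|T_{\lambda+\frac12}(\ell^{2})\bigr)=(\theta f)\big|T_{\lambda+2+\frac12}(\ell^{2})$, one must compare $\ell^{2}(\ell^{29}+1)$ with $1+\ell$, and the theorem reduces to $\ell^{31}+\ell^{2}-\ell-1\equiv 0\pmod{2^{\gamma-1}}$, whose 2-adic valuation is exactly $4$, $5$, or $6$ according as $\ell\equiv 3$; $5,7$; or $1\pmod 8$. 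This is where $\gamma$ comes from; a hypothetical $h_{2}$ with eigenvalue exactly $1+\ell$ and $\theta\Pbar\equiv h_{2}\pmod{2^{7}}$ would prove the uniform modulus $2^{7}$, which is not what the theorem asserts. (Also note a small but telling point: in weight $-\tfrac12$ the constant-term eigenvalue of any eigenform with $a(0)\neq 0$ is $1+\ell^{-3}$, so even your $h_{1}$ cannot have eigenvalue $1+\ell$ on the nose.) Without the weight-raising congruences and the support-plus-dimension argument, the plan does not close.
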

\end{section}

\begin{section}{Preliminaries}
Let $\lambda$ be a nonnegative integer, $N$ be a positive integer, and $\chi$ be a Dirichlet character modulo $4N$. A holomorphic function $f(\tau)$ on the complex upper half-plane $\mathbb{H}$ is called a (weakly) holomorphic half-integral weight modular form with weight $\lambda + 1/2$ and character $\chi$ if it is holomorphic (resp. meromorphic) at the cusps and if
\[
	f\pfrac{a\tau+b}{c\tau+d} = \chi(d)\pfrac{c}{d}^{2\lambda+1}\epsilon_{d}^{-1-2\lambda}(c\tau+d)^{\lambda+1/2} f(\tau) \text{ for all } \pmatrix{a}{b}{c}{d} \in \Gamma_{0}(4N),
\]
where
\[
	\epsilon_{d} = \begin{cases}
		1 & \text{ if } d\equiv 1\pmod{4}, \\
		i & \text{ if } d\equiv 3\pmod{4}.
	\end{cases}
\]

Suppose that $f$ has the Fourier expansion $f(\tau)=\sum_{n}a(n)q^{n}$. For a prime $\ell$ and a Dirichlet character $\chi$, define the weight $\lambda + 1/2$ Hecke operator $\tt{\lambda+\frac{1}{2}}{\ell^{2}}{\chi}$ (or simply $\t{\lambda+\frac{1}{2}}{\ell^{2}}$ when the character is trivial) by
\begin{equation} \label{eq:hecke-def}
	f \TT{\lambda+\frac{1}{2}}{\ell^{2}}{\chi} (\tau) = \sum_{n}\left( a(\ell^{2}n) + \chi^{*}(\ell)\pfrac{n}{\ell}\ell^{\lambda-1} a(n) + \chi^{*}(\ell^{2}) \ell^{2\lambda-1} a(n/\ell^{2}) \right)q^{n},
\end{equation}
where $\chi^{*}$ is the Dirichlet character given by $\chi^{*}(m):=\left(\frac{(-1)^{\lambda}}{m}\right) \chi(m)$ and $a(n/\ell^{2})=0$ if $\ell^{2} \nmid n$.

A computation shows that the Hecke operator $\t{}{\ell^{2}}$ commutes with the operator $\theta:=q\frac{d}{dq}$ in the following way:

\begin{equation}
	\ell^{2} \theta \left( f \TT{\lambda+\frac{1}{2}}{\ell^{2}}{\chi} \right) = (\theta f) \TT{\lambda+2+\frac{1}{2}}{\ell^{2}}{\chi}. \label{eq:theta-hecke}
\end{equation}

We define the Fricke involution $\w{}{N}$ on $M_{k}^{!}(\Gamma_{0}(N))$ by
\begin{gather} \label{eq:fricke-def}
	F\W{k}{N}(\tau) := \left(-i\sqrt{N}\,\tau\right)^{-k} F(-1/N\tau) .
\end{gather}
Using the fact that $\eta(-1/\tau) = \sqrt{-i\tau} \, \eta(\tau)$, we find that the functions $R(\tau)$ and $\Pbar(\tau)$ are related by
\begin{gather} \label{eq:r-fricke-p}
	R\W{-\frac{1}{2}}{16}(\tau) = \sqrt{-4i\tau}\pfrac{\eta(-1/2\tau)}{\eta^{2}(-1/\tau)} = \sqrt{8} \, \Pbar(\tau).
\end{gather}

We let $M_{k}(\Gamma)$ (resp. $M_{k}^{!}(\Gamma)$) denote the space of modular forms (resp. weakly holomorphic modular forms) of weight $k$ for a congruence subgroup $\Gamma \subset \SL$. For $4\leq k\in 2\Z$, $E_{k}(\tau)$ denotes the weight $k$ Eisenstein series 
\[
	E_{k}(\tau) := 1 - \frac{2k}{B_{k}}\sum_{n=1}^{\infty}\sigma_{k-1}(n) q^{n} \in M_{k}(\SL),
\]
where $B_{k}$ is the $k^{\text{th}}$ Bernoulli number and $\sigma_{m}(n)$ is the sum of divisors function $\sigma_{m}(n) := \sum_{d|n} d^{m}$. Some other useful modular forms are
\begin{gather}
	E(\tau) := 2E_{2}(2\tau) - E_{2}(\tau) \in M_{2}(\Gamma_{0}(2)), \\
	\psi(\tau) := \frac{\eta^{16}(2\tau)}{\eta^{8}(\tau)} = R^{-8}(\tau/8) \in M_{4}(\Gamma_{0}(2)), \\
	\rho(\tau) := \frac{\eta^{16}(\tau)}{\eta^{8}(2\tau)} = 2^{6} \, \psi \W{4}{2}(\tau) = \Pbar^{-8}(\tau) \in M_{4}(\Gamma_{0}(2)). \label{eq:rho-fricke-psi}
\end{gather}
\end{section}

\begin{section}{Proof of Theorem \ref{thm:pbar}}

Let $\ell$ be an odd prime. To prove the statement 
\[
	\pbar(\ell^{2}n) + \pfrac{-n}{\ell} \ell^{-2} \, \pbar(n) + \ell^{-3} \, \pbar(n/\ell^{2}) \equiv (1+\ell) \, \pbar(n) \pmod{16}
\]
from Theorem \ref{thm:pbar}, we will show that
\begin{equation} \label{eq:pbar-hecke-mod-16}
	\Pbar \T{-\frac{1}{2}}{\ell^{2}} (\tau) - (1+\ell)\Pbar(\tau) \equiv 0 \pmod {16}.
\end{equation}
To do this we need the congruence
\[
	\Pbar(\tau) \equiv \Pbar^{-7}(\tau) \pmod {16},
\]
which follows immediately from the fact that $\rho(\tau) \equiv 1\pmod {16}$ and $\rho(\tau) = \Pbar^{-8}(\tau)$. We will also need the following proposition.

\begin{prop} \label{prop:pbar-7-eigenform} Let $\ell$ be an odd prime. Then $\Pbar^{-7}(\tau)$ is an eigenform for the weight $7/2$ Hecke operator $\t{\frac{7}{2}}{\ell^{2}}$ with eigenvalue $\ell^{5}+1$.
\end{prop}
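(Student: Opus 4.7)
The plan is to identify $\Pbar^{-7}$ with a power of a classical Jacobi theta function and then to exploit the structure of Eisenstein series. By Jacobi's triple product,
\begin{equation*}
\Pbar^{-1}(\tau) \;=\; \frac{\eta^{2}(\tau)}{\eta(2\tau)} \;=\; \sum_{n\in\Z}(-1)^{n}q^{n^{2}} \;=:\; \theta_{4}(\tau),
\end{equation*}
so $\Pbar^{-7}(\tau) = \theta_{4}^{7}(\tau)$. Standard $\eta$-quotient theorems (Ligozat) then place $\Pbar^{-7}$ in $M_{7/2}(\Gamma_{0}(16),\chi)$ for an appropriate real Dirichlet character $\chi$, and holomorphy at every cusp of $\Gamma_{0}(16)$ is checked by a routine order-of-vanishing computation for $\eta^{14}(\tau)/\eta^{7}(2\tau)$.

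The main step is to recognize $\Pbar^{-7}$ as an Eisenstein series in this space, so that the Hecke eigenvalue falls out of general theory. I would argue that the Eisenstein subspace of $M_{7/2}(\Gamma_{0}(16),\chi)$ whose elements have nonzero constant term at $\infty$ is one-dimensional --- a dimension count combined with the Cohen-type Eisenstein series $H_{7/2}$ should suffice --- so $\Pbar^{-7}$ coincides (up to scalar) with a distinguished Eisenstein series whose Shimura lift is proportional to $E_{6}\in M_{6}(\SL)$. Since $E_{6}$ is a Hecke eigenform for $T(\ell)$ with eigenvalue $\sigma_{5}(\ell) = \ell^{5}+1$, the Shimura correspondence, which intertwines $T_{7/2}(\ell^{2})$ with $T(\ell)$ on the lift, yields $\Pbar^{-7}\T{7/2}{\ell^{2}} = (\ell^{5}+1)\Pbar^{-7}$.

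The eigenvalue is in any case forced by the Hecke formula~\eqref{eq:hecke-def}: with $\lambda = 3$ and $n = 0$, the $q^{0}$-coefficient of $\Pbar^{-7}\T{7/2}{\ell^{2}}$ equals $(1+\ell^{5})\cdot 1$, so once eigenformhood is established the eigenvalue must be $\ell^{5}+1$. The main obstacle is thus the Eisenstein identification. A more pedestrian alternative is to verify the eigenequation coefficient-by-coefficient, matching both sides up to the Sturm bound for $M_{7/2}(\Gamma_{0}(16\ell^{2}),\chi)$, thereby reducing the proposition to a finite computation once the ambient space has been pinned down.
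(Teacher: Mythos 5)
Your identification $\Pbar^{-1}(\tau)=\eta^{2}(\tau)/\eta(2\tau)=\sum_{n\in\Z}(-1)^{n}q^{n^{2}}$ is correct, and your remark that the eigenvalue is forced by the constant term via \eqref{eq:hecke-def} is exactly how the paper extracts $\ell^{5}+1$. The gap is in the step you yourself flag as the main obstacle: establishing eigenformhood. The Eisenstein subspace of $M_{\frac{7}{2}}(\Gamma_{0}(16),\chi)$ is far from one-dimensional ($\Gamma_{0}(16)$ has six cusps, and the full space has dimension $8$), and ``the Eisenstein series with nonzero constant term at $\infty$'' does not single out a one-dimensional space: even granting that $\theta_{4}^{7}$ lies in the Eisenstein subspace (which does follow, since $S_{\frac{7}{2}}(\Gamma_{0}(4))=0$ and $\theta_{4}^{7}(\tau)=\theta^{7}(\tau+\tfrac12)$ is a translate of $\theta^{7}$), it could a priori differ from your distinguished eigen-Eisenstein series by an Eisenstein series vanishing at $\infty$, and a general element of a multi-dimensional Eisenstein subspace is not a Hecke eigenform. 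The Shimura correspondence only transports eigenvalues \emph{after} eigenformhood of the half-integral weight form is known; you cannot run it backwards from $E_{6}$ without an injectivity statement you don't have on the Eisenstein part. Your fallback also does not reduce to a finite computation: since $\t{\frac{7}{2}}{\ell^{2}}$ preserves the level, the Sturm bound for $M_{\frac{7}{2}}(\Gamma_{0}(16),\chi)$ is indeed small and independent of $\ell$, but checking the eigenequation up to that bound requires the coefficients $a(\ell^{2}n)$ for small $n$ and \emph{every} odd prime $\ell$ --- essentially a formula for $r_{7}(\ell^{2}n)$, which is the content of the proposition.

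The paper closes precisely this gap with a support argument uniform in $\ell$. It passes via the Fricke involution (which commutes with $\t{}{\ell^{2}}$ for odd $\ell$) to $R^{-7}\in M_{\frac{7}{2}}(\Gamma_{0}(16))$, which equals $q^{7}+O(q^{15})$ and is supported on exponents $\equiv 7\pmod{8}$. Since $\ell^{2}\equiv 1\pmod 8$, the operator $\t{\frac{7}{2}}{\ell^{2}}$ preserves this support, so the coefficients of $q^{0},\dots,q^{6}$ in $R^{-7}\T{\frac{7}{2}}{\ell^{2}}$ all vanish; an echelonized basis $f_{m}=q^{m}+O(q^{8})$ of the $8$-dimensional space then forces $R^{-7}\T{\frac{7}{2}}{\ell^{2}}=\lambda_{\ell}R^{-7}$ with no computation depending on $\ell$, and the constant term of $\Pbar^{-7}\T{\frac{7}{2}}{\ell^{2}}$ gives $\lambda_{\ell}=\ell^{5}+1$. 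If you want to salvage your route you would need to decompose $\theta^{7}$ explicitly into Hecke-eigen Eisenstein series (e.g.\ Cohen's $\mathcal{H}_{7/2}$ and its companion in the two-dimensional space $M_{\frac{7}{2}}(\Gamma_{0}(4))$) and check that only one eigencomponent appears; that is doable but is more work than the paper's one-line support argument.
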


Let us assume for the moment that this proposition is true. 
Since $\ell^{-1}\equiv \ell^{3}\pmod{16}$, the Hecke operators $\t{-\frac{1}{2}}{\ell^{2}}$ and $\t{\frac{7}{2}}{\ell^{2}}$ agree modulo 16, so we have
\begin{align*}
	\Pbar \T{-\frac{1}{2}}{\ell^{2}} - (1+\ell)\Pbar &\equiv \Pbar^{-7} \T{\frac{7}{2}}{\ell^{2}} - (1+\ell)\Pbar^{-7}  \\
	&\equiv (\ell^{5}+1) \Pbar^{-7} - (1+\ell) \Pbar^{-7}  \\
	&\equiv (\ell^{5}-\ell)\Pbar^{-7} \equiv 0 \pmod{16},
\end{align*}
and the first statement in Theorem \ref{thm:pbar} follows.

\begin{proof}[Proof of Proposition \ref{prop:pbar-7-eigenform}]
Recall that the Fricke involution $\w{}{N}$ commutes with the Hecke operator $\t{}{\ell^{2}}$ when $\ell \nmid N$ (see, for example, chapter 1 of~\cite{Ono:web}). Therefore $\Pbar^{-7}(\tau)$ is an eigenform for $\t{}{\ell^{2}}$ if and only if $R^{-7}(\tau)$ is an eigenform for $\t{}{\ell^{2}}$. Since $R^{-7}(\tau) \in M_{\frac{7}{2}}(\Gamma_{0}(16))$, we use the dimension formulas of Cohen-Oesterle (see~\cite{CohenOesterle}) to compute $\dim\left( M_{\frac{7}{2}}(\Gamma_{0}(16)) \right)=8$, and we explicitly compute that there exists a basis of the form $\{f_{m}(\tau)\}_{m=0}^{7}$ where $f_{m}(\tau) = q^{m} + O(q^{8})$. Notice that $R^{-7}(\tau)$ vanishes to order $7$, so $R^{-7}(\tau) = f_{7}(\tau)$. The form $R^{-7}(\tau) = q^{7} + O(q^{15})$ is supported on exponents which are $ 7 \pmod 8$  so the only coefficients which appear in the $q$-expansion of $R^{-7}\T{\frac{7}{2}}{\ell^{2}}$ are those for which the exponent is $7\pmod {8}$. So we have
	\begin{gather*} 
		R^{-7}\T{\frac{7}{2}}{\ell^{2}} = \lambda_{\ell} \, q^{7} + O(q^{15}) \in M_{\frac{7}{2}}(\Gamma_{0}(16))
	\end{gather*}
for some $\lambda_{\ell} \in \Z$ and therefore $R^{-7}\T{\frac{7}{2}}{\ell^{2}} = \lambda_{\ell}  f_{7} = \lambda_{\ell} \, R^{-7}$. To determine the eigenvalue $\lambda_{\ell}$ we use \eqref{eq:r-fricke-p} to compute
 \begin{align*}
 	\Pbar^{-7}\T{\frac{7}{2}}{\ell^{2}} &= 8^{7/2}R^{-7}\W{\frac{7}{2}}{16}\T{\frac{7}{2}}{\ell^{2}}  \\
	&= 8^{7/2}\lambda_{\ell}R^{-7}\W{\frac{7}{2}}{16}  
	= \lambda_{\ell}\Pbar^{-7},
 \end{align*}
so $\lambda_{\ell}$ is the constant term of $\Pbar^{-7}\T{\frac{7}{2}}{\ell^{2}}$ which, by \eqref{eq:hecke-def}, is equal to $\ell^{5}+1$.
\end{proof}

To prove the statement
\[
	\ell^{2}n\pbar(\ell^{2}n) + \pfrac{-n}{\ell} n \pbar(n) + \ell^{-1} n \pbar(n/\ell^{2}) \equiv (1+\ell) \, n \pbar(n) \pmod{2^{\gamma}}
\]
from Theorem \ref{thm:pbar}, we will prove the equivalent statement
\begin{gather} \label{eq:theta-pbar-hecke-mod-64}
	\left(\theta \Pbar\right) \T{\frac{3}{2}}{\ell^{2}}(\tau) - (1+\ell)\theta \Pbar(\tau) \equiv 0 \pmod{2^{\gamma}},
\end{gather}
where we recall that
\[
	\gamma=\begin{cases}
					 5 & \text{if } \ell\equiv 3 \pmod 8 \\
					 6 & \text{if } \ell\equiv 5,7\pmod 8 \\
					 7 & \text{if } \ell\equiv 1 \pmod 8.\end{cases}
\]
To do this we will proceed as before, replacing $\theta \Pbar(\tau)$ by $\theta\left(\Pbar^{-31}(\tau) + 64\Pbar(\tau)\psi(\tau)\right)$, a switch that is justified by the following lemma.

\begin{lem}
\begin{gather} \label{eq:theta-p-mod-64}
	\theta\Pbar \equiv \theta\left(\Pbar^{-31} + 64\Pbar\psi\right) \pmod{128}.
\end{gather}
\end{lem}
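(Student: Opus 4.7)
The plan is to reduce the claim modulo $128$ to a mod-$2$ statement about the parity of odd-index Fourier coefficients, using the factorization $\Pbar^{-31} = \Pbar \cdot \rho^{4}$ and the identity $E^{2} = \rho + 64\psi$ in $M_{4}(\Gamma_{0}(2))$.

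First, since $\rho = \Pbar^{-8}$ and $\rho \equiv 1 \pmod{16}$, writing $\rho - 1 = 16H$ with $H \in \Z[[q]]$ and binomially expanding $(1+16H)^{4}$ shows that $\rho^{4} \equiv 1 + 4(\rho - 1) \pmod{128}$, because $\binom{4}{2} \cdot 16^{2} = 1536 \equiv 0 \pmod{128}$ and the higher-order terms involve $16^{k}$ for $k \geq 3$. Multiplying by $\Pbar$ yields
\[
\Pbar^{-31} + 64\Pbar\psi - \Pbar \equiv 4\Pbar(\rho - 1 + 16\psi) \pmod{128},
\]
so the lemma reduces to proving $\theta\bigl(\Pbar(\rho - 1 + 16\psi)\bigr) \equiv 0 \pmod{32}$.

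Next, since $\dim M_{4}(\Gamma_{0}(2)) = 2$, comparing the first two Fourier coefficients of $\rho$, $\psi$, and $E^{2}$ yields the identity $E^{2} = \rho + 64\psi$, so $\rho - 1 + 16\psi = E^{2} - 1 - 48\psi$. Writing $E = 1 + 24S$ with $S := \sum_{m \geq 1} \sigma_{1}(\operatorname{odd}(m))\, q^{m}$, one has $E^{2} - 1 = 48S + 576S^{2}$, whence $\rho - 1 + 16\psi = 16(3S + 36S^{2} - 3\psi) \equiv 16(S + \psi) \pmod{32}$. Since $\Pbar \equiv 1 \pmod{2}$ (because $1 - q^{n} \equiv 1 + q^{n} \pmod 2$), the claim collapses to $\theta(S + \psi) \equiv 0 \pmod{2}$; that is, the odd-index coefficients of $S + \psi$ must all be even.

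To finish, I compute the odd-exponent parts of $S$ and $\psi$ modulo $2$. For $m$ odd, $[q^{m}]S = \sigma_{1}(m) \equiv d(m) \pmod{2}$, which is odd precisely when $m$ is an odd perfect square. For $\psi$, writing $\psi = q\prod(1-q^{n})^{8}(1+q^{n})^{16}$ and using the Frobenius identities $(1 \pm q^{n})^{2^{j}} \equiv 1 + q^{2^{j} n} \pmod{2}$ gives $\psi \equiv q\prod(1 + q^{8n})(1 + q^{16n}) \pmod{2}$. By Gauss's triangular-number identity $\sum_{j \geq 0}u^{j(j+1)/2} = \prod(1-u^{n})(1+u^{n})^{2}$ together with the mod-$2$ simplifications $\prod(1-u^{n})(1+u^{n})^{2} \equiv \prod(1+u^{n})^{3} \equiv \prod(1+u^{n})(1+u^{2n})$,
\[
\prod_{n \geq 1} (1 + u^{n})(1 + u^{2n}) \equiv \sum_{j \geq 0} u^{j(j+1)/2} \pmod{2}.
\]
Substituting $u = q^{8}$ and observing $8 \cdot j(j+1)/2 + 1 = (2j+1)^{2}$ gives $\psi \equiv \sum_{j \geq 0} q^{(2j+1)^{2}} \pmod{2}$. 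The odd-exponent parts of $\psi$ and $S$ modulo $2$ are thus both the indicator series of odd squares, so $S + \psi$ has even odd-indexed coefficients, completing the proof. The main obstacle is locating the right classical identity so that the Frobenius reduction of $\psi$ matches the divisor-sum structure of $S$ on odd exponents; without this matching, the entire chain of reductions would leave a residual error term.
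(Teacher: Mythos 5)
Your argument is correct, and it reaches the congruence by a genuinely different route from the paper's. The paper differentiates first: using the logarithmic-derivative formulas $\theta\rho=-\tfrac13\rho(E-E_2)$ and $\theta\psi=\tfrac13\psi(2E+E_2)$, it expands $\theta\bigl(\Pbar(1-\rho^4-64\psi)\bigr)$ by the product rule and reduces the whole lemma to the single Eisenstein congruence $E-E_2\equiv 16\psi\pmod{32}$, which is checked by comparing $\sigma_1$ and $\sigma_3$ modulo $2$. You instead manipulate the forms themselves before ever applying $\theta$: the binomial estimate $\rho^4\equiv 1+4(\rho-1)\pmod{128}$, the relation $E^2=\rho+64\psi$ forced by two coefficients in the two-dimensional space $M_4(\Gamma_0(2))$, and the Frobenius/Gauss identification $\psi\equiv\sum_{j\ge 0}q^{(2j+1)^2}\pmod 2$ reduce everything to the classical parity fact that $\sigma_1(m)$ is odd exactly when the odd integer $m$ is a square. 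I verified the individual steps (the valence-formula justification that two coefficients determine a form in $M_4(\Gamma_0(2))$, the coefficient check of $E^2=\rho+64\psi$, the reduction $\rho-1+16\psi\equiv 16(S+\psi)\pmod{32}$, and the triangular-number manipulation) and they all hold. The trade-off is clear: the paper's route is shorter because the $\theta$-derivative formulas absorb the differentiation in one line, while your route avoids computing any derivatives of $\rho$ and $\psi$ and makes the role of $\theta$ purely combinatorial (modulo $2$ it simply kills even exponents), at the cost of having to push the congruence between the underlying forms one power of $2$ deeper. Both arguments are complete; yours also isolates the identity $E^2=\rho+64\psi$, which is the undifferentiated cousin of the paper's $E-E_2\equiv16\psi\pmod{32}$.
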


\begin{proof}
Since $\rho(\tau) = \Pbar^{-8}(\tau)$, an equivalent form of \eqref{eq:theta-p-mod-64} is
	\begin{gather} \label{eq:theta-p-rho-psi}
		\theta(\Pbar(1-\rho^{4}-64\psi)) \equiv 0 \pmod{128}.
	\end{gather}
To prove \eqref{eq:theta-p-rho-psi} we will need the derivatives
\begin{gather}
	\theta\rho = -\frac{1}{3}\rho(E - E_{2}), \label{eq:rho-deriv} \\
	\theta\psi = \frac{1}{3}\psi(2E + E_{2}). \label{eq:psi-deriv}
\end{gather}
Since $\rho^{4}\equiv 1\pmod{64}$ and $\Pbar\equiv 1\pmod{2}$ we have
\begin{align*}
	\theta(\Pbar(1-\rho^{4}-64\psi)) &= (\theta \Pbar)(1-\rho^{4}-64\psi) + \Pbar \, \theta(1-\rho^{4}-64\psi) \\
	&\equiv \Pbar \, \left( \frac{4}{3}\rho^{4}(E-E_{2}) - \frac{64}{3} \psi (2E+E_{2}) \right) \pmod{128}.
\end{align*}
Since $2E+E_{2} \equiv 1\pmod{2}$, it remains to show that 
\begin{equation}
	E-E_{2}\equiv 16\psi \pmod{32}. \label{eq:e-e2-16-psi-mod-32}
\end{equation}
Computation shows that
\[
	16\psi(\tau) = \frac{16}{240} \left( E_{4}(\tau) - E_{4}(2\tau) \right) = 16\sum_{n=1}^{\infty}\left( \sigma_{3}(n) - \sigma_{3}(n/2) \right) q^{n}.
\]	
We also have
\[
	E(\tau) - E_{2}(\tau) = 2(E_{2}(2\tau) - E_{2}(\tau)) = 48\sum_{n=1}^{\infty}\left( \sigma_{1}(n) - \sigma_{1}(n/2) \right)q^{n}.
\]
Congruence \eqref{eq:e-e2-16-psi-mod-32} follws since $\sigma_{3}(n) \equiv \sigma_{1}(n) \pmod{2}$.
\end{proof}

We return now to the proof of \eqref{eq:theta-pbar-hecke-mod-64}; in light of \eqref{eq:theta-hecke} and \eqref{eq:theta-p-mod-64}, it is enough to prove the statements
\begin{gather}
	\theta\left( \ell^{2} \Pbar^{-31}\T{-\frac{1}{2}}{\ell^{2}} - (1+\ell)\Pbar^{-31} \right)\equiv 0 \pmod{2^{\gamma}}, \label{eq:theta-pbar-hecke-weight-1/2} \\
	\theta \left(\ell^{2} \Pbar\psi \T{-\frac{1}{2}}{\ell^{2}} - (1+\ell)\Pbar\psi \right)\equiv 0 \pmod 2. \label{eq:theta-pbar-psi-mod-2}
\end{gather}
We would like to exchange weight $-1/2$ for weight $31/2$ in the first case and weight $7/2$ in the second case. The exchange is trivial in \eqref{eq:theta-pbar-psi-mod-2}. To see that it is justified in \eqref{eq:theta-pbar-hecke-weight-1/2}, let $\Pbar^{-31}(\tau) = \sum_{n=0}^{\infty} a(n) q^{n}$. Since $\Pbar(\tau) \equiv 1\pmod 2$, $a(n)$ is even for all $n\geq 1$. Hence
\[
	\ell^{2}\Pbar^{-31}\T{\frac{31}{2}}{\ell^{2}}(\tau) - \ell^{2}\Pbar^{-31}\T{-\frac{1}{2}}{\ell^{2}}(\tau) \equiv \sum_{n=1}^{\infty} \pfrac{-n}{\ell} (\ell^{16}-1) a(n) q^{n} \equiv 0 \pmod{128},
\]
since $\ell^{16}\equiv 1\pmod {64}$. Therefore \eqref{eq:theta-pbar-hecke-weight-1/2} is implied by
\begin{gather}
	\theta\left( \ell^{2} \Pbar^{-31}\T{\frac{31}{2}}{\ell^{2}}(\tau) - (1+\ell)\Pbar^{-31}(\tau) \right)\equiv 0 \pmod{2^{\gamma}}. \label{eq:theta-pbar-hecke-weight-15/2}
\end{gather}
The truth of \eqref{eq:theta-pbar-hecke-weight-15/2} will follow from the next proposition.

\begin{prop} \label{prop:pbar-31-eigenform} The function $\Pbar^{-31}(\tau)$ is an eigenform for the Hecke operator $\t{\frac{31}{2}}{\ell^{2}}$ modulo $2^{12}$ with eigenvalue $\ell^{29}+1$.
\end{prop}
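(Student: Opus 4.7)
The strategy is to adapt the proof of Proposition~\ref{prop:pbar-7-eigenform} to weight $31/2$. By \eqref{eq:r-fricke-p} we have $\Pbar^{-31} = 8^{-31/2} R^{-31}\W{31/2}{16}$, and since $\w{}{16}$ commutes with $\t{31/2}{\ell^{2}}$ for odd $\ell$, the proposition is equivalent to
\[
	R^{-31}\T{31/2}{\ell^{2}} \equiv (\ell^{29}+1)R^{-31} \pmod{2^{12}}.
\]
Because $R(\tau) = q^{-1}(1-q^{8}+q^{16}-\cdots)$, the $q$-expansion of $R^{-31}$ is supported on exponents $\equiv 7 \pmod 8$, and $\t{31/2}{\ell^{2}}$ preserves this support for odd $\ell$, so both sides of the congruence lie in the subspace of $M_{31/2}^{!}(\Gamma_{0}(16))$ supported on these exponents. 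The candidate eigenvalue is forced by a single coefficient: applying \eqref{eq:hecke-def} with $\lambda=15$ to $\Pbar^{-31}(\tau) = 1 + O(q)$ shows that the constant term of $\Pbar^{-31}\T{31/2}{\ell^{2}}$ equals $1+\ell^{29}$.

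The main obstacle, absent from Proposition~\ref{prop:pbar-7-eigenform}, is that $M_{31/2}^{!}(\Gamma_{0}(16))$ is much larger than the $8$-dimensional $M_{7/2}(\Gamma_{0}(16))$, so the subspace of forms supported on exponents $\equiv 7 \pmod 8$ is no longer one-dimensional. A support argument alone therefore cannot force $R^{-31}\T{31/2}{\ell^{2}}$ to be a scalar multiple of $R^{-31}$, and it is precisely here that the mod $2^{12}$ hypothesis must be exploited. My plan is to use the factorization $\Pbar^{-31} = \Pbar^{-7}\rho^{3}$. Since $\rho \equiv 1 \pmod{16}$, one may expand $\rho^{3} = 1 + 16h + 256k + \cdots \pmod{2^{12}}$ with explicit weight-$12$ forms $h,k,\ldots \in M_{12}(\Gamma_{0}(2))$, and then decompose the difference $R^{-31}\T{31/2}{\ell^{2}} - (\ell^{29}+1) R^{-31}$ as a $2$-adic series. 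The leading term is controlled by the exact identity $R^{-7}\T{7/2}{\ell^{2}} = (\ell^{5}+1)R^{-7}$ from Proposition~\ref{prop:pbar-7-eigenform}, and the successive corrections are divisible by increasing powers of $2$ and live in spaces built from $R^{-7}$ multiplied by the weight-$12$ pieces $h,k,\ldots$.

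The delicate bookkeeping is the weight discrepancy: the Hecke formula \eqref{eq:hecke-def} at weight $31/2$ produces $\ell^{14}$ and $\ell^{29}$, while at weight $7/2$ it produces $\ell^{2}$ and $\ell^{5}$. To absorb these mismatches into the higher-order corrections one needs $2$-adic congruences such as $\ell^{16}\equiv 1 \pmod{64}$ (as already used in the main text) together with analogous refinements up to the mod $2^{12}$ threshold, applied term by term in the $2$-adic expansion of $\rho^{3}$. Pushing the expansion far enough to reach $2^{12}$ while verifying that each successive correction cancels against a suitable multiple of $R^{-7}$ in the relevant subspace is the hard part; I expect it to hinge on having enough coefficients of $\rho^{3}$ written down explicitly so that a Sturm-type check in $M_{31/2}^{!}(\Gamma_{0}(16))$ closes the argument.
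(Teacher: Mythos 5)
Your opening reduction is where the argument breaks. The Fricke involution $\w{}{16}$ does commute with $\t{\frac{31}{2}}{\ell^{2}}$, so an \emph{exact} eigenform statement transfers between $\Pbar^{-31}$ and $R^{-31}$; but a congruence modulo $2^{12}$ does not, because $\W{\frac{31}{2}}{16}$ is not a coefficient-wise operation and does not preserve $2$-divisibility of Fourier coefficients. In fact the statement you call equivalent to the proposition, namely $R^{-31}\T{\frac{31}{2}}{\ell^{2}} \equiv (\ell^{29}+1)R^{-31} \pmod{2^{12}}$, is false in general: one has the exact identity
\[
	R^{-31}\T{\frac{31}{2}}{\ell^{2}} = c_{0}R^{-7}\rho^{3}(8\tau) + c_{1}R^{-15}\rho^{2}(8\tau) + c_{2}R^{-23}\rho(8\tau) + (\ell^{29}+1)R^{-31},
\]
where $c_{0}=a_{31}(7\ell^{2})$ is the number of representations of $7\ell^{2}$ as an ordered sum of $31$ odd squares, and this is odd for, e.g., $\ell=3$ (indeed, controlling the parity of $c_{0},c_{1},c_{2},d_{0}$ according to $\ell \bmod 8$ is exactly the content of the last proposition of the paper). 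The modulus $2^{12}$ materializes only after applying $\w{}{16}$, which by \eqref{eq:rho-fricke-psi} converts each factor $\rho(8\tau)$ into $2^{12}\psi(\tau)$ on the $\Pbar$ side and thereby annihilates the first three terms modulo $2^{12}$. So the congruence must be proved for $\Pbar^{-31}$ directly, not pushed over to $R^{-31}$.

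The second gap is that your mechanism for the correction terms does not close. Writing $\Pbar^{-31}=\Pbar^{-7}\rho^{3}$ and expanding $\rho^{3}$ $2$-adically does not interact with $\t{\frac{31}{2}}{\ell^{2}}$, since Hecke operators do not distribute over products; the ``leading term'' of your expansion would be $\Pbar^{-7}\T{\frac{31}{2}}{\ell^{2}}$, a weight-$\frac{31}{2}$ operator applied to a weight-$\frac{7}{2}$ form, and the resulting weight mismatch involves the factors $\ell^{12}-1$ and $\ell^{24}-1$, which are divisible only by $2^{4}$ and $2^{5}$ in general --- nowhere near $2^{12}$ even after using that the nonconstant coefficients of $\Pbar^{-7}$ are even. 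Moreover a Sturm-type verification of a congruence whose right-hand side depends on $\ell$ would have to be repeated for infinitely many primes. The missing structural idea is to multiply $R^{-31}\T{\frac{31}{2}}{\ell^{2}}$ by $R^{7}$: the product is a weight $12$ form on $\Gamma_{0}(16)$ supported on exponents divisible by $8$, hence (after $\tau\mapsto\tau/8$ and checking holomorphy at $0$) lies in the fixed four-dimensional space $M_{12}(\Gamma_{0}(2))$ with integral basis $\rho^{3-j}\psi^{j}$. This yields the displayed identity for every odd $\ell$ at once, with $c_{3}=\ell^{29}+1$ read off from the constant term of $\Pbar^{-31}\T{\frac{31}{2}}{\ell^{2}}$ exactly as you computed, and the proposition then follows by applying $\w{}{16}$.
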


Assuming for the moment that this proposition is true, we have
\begin{align*}
	\ell^{2} \Pbar^{-31}\T{\frac{31}{2}}{\ell^{2}}(\tau) - (1+\ell)\Pbar^{-31}(\tau) &\equiv (\ell^{2}(\ell^{29}+1) - (1+\ell))\Pbar^{-31}(\tau) \pmod{2^{12}}.
	\end{align*}
Then \eqref{eq:theta-pbar-hecke-weight-15/2} follows from the congruence
	\[
		\ell^{31}+\ell^{2}-\ell-1 \equiv 0\pmod {2^{\gamma-1}},
	\] 
which is easily checked.

\begin{proof}[Proof of Proposition \ref{prop:pbar-31-eigenform}]
Since the form $R^{-1}(\tau)$ is supported on exponents which are $7\pmod {8}$ and the form $R(\tau)$ is supported on exponents which are $1\pmod {8}$, we conclude that
\[
	\tilde{R}_{\ell}(\tau) := R^{-31}\T{\frac{31}{2}}{\ell^{2}}(\tau) \cdot R^{7}(\tau) \in M_{12}^{!}(\Gamma_{0}(16))
\]
is supported on exponents divisible by 8. By \cite[Lemma~7]{AtkinLehner}, we have $\tilde{R}_{\ell}(\tau/8) \in M_{12}^{!}(\Gamma_{0}(2))$, and a computation involving the Fricke involution $\w{}{2}$ shows that $\tilde{R}_{\ell}(\tau/8)$ is holomorphic at the cusp 0. So $\tilde{R}_{\ell}(\tau/8) \in M_{12}(\Gamma_{0}(2))$, which implies that there exist integers $c_{j}$, $0\leq j\leq 3$, such that
\[
	\tilde{R}_{\ell}(\tau/8) = c_{0}\rho^{3}(\tau) + c_{1}\rho^{2}(\tau)\psi(\tau) + c_{2}\rho(\tau)\psi^{2}(\tau) + c_{3}\psi^{3}(\tau).
\]
Since $\psi(8\tau) = R^{-8}(\tau)$, we have
\begin{gather} \label{eq:r-15-hecke-r-7-rho-r-15}
	R^{-31}\T{\frac{31}{2}}{\ell^{2}}(\tau) = c_{0}R^{-7}(\tau)\rho^{3}(8\tau) + c_{1}R^{-15}(\tau)\rho^{2}(8\tau) + c_{2}R^{-23}(\tau)\rho(8\tau) + c_{3} R^{-31}(\tau).
\end{gather}
Applying the Fricke involution $\w{}{16}$ to \eqref{eq:r-15-hecke-r-7-rho-r-15} and multiplying by $8^{31/2}$, we obtain
\[
	\Pbar^{-31}\T{\frac{31}{2}}{\ell^{2}} = 2^{36} \, c_{0} \, \Pbar^{-7}\psi^{3} + 2^{24} \, c_{1} \, \Pbar^{-15}\psi^{2} + 2^{12} \, c_{2} \, \Pbar^{-23}\psi + c_{3} \,\Pbar^{-31}(\tau).
\]
Using the fact that $\psi$ vanishes at $\infty$ and recalling \eqref{eq:hecke-def}, we conclude that $c_{3}=\ell^{29}+1$. Hence,
\[
	\Pbar^{-31}\T{\frac{31}{2}}{\ell^{2}} \equiv (\ell^{29}+1)\Pbar^{-31}(\tau) \pmod{2^{12}},
\]
which completes the proof of Proposition \ref{prop:pbar-31-eigenform}.
\end{proof}

It remains only to prove \eqref{eq:theta-pbar-psi-mod-2} to finish the proof of Theorem \ref{thm:pbar}.
Since 
$\psi(\tau) \equiv \sum_{n=0}^{\infty}q^{(2n+1)^{2}} \pmod{2}$, we only need to show that 
\[
	\left( \sum_{n=0}^{\infty}q^{(2n+1)^{2}} \right)\T{\frac{7}{2}}{\ell^{2}} \equiv 0 \pmod 2.
\]
Let $\sum_{n=0}^{\infty}q^{(2n+1)^{2}} = \sum_{n=1}^{\infty}b(n) q^{n}$. Then
\[
	b(\ell^{2}n) + \pfrac{-n}{\ell}b(n) + b(n/\ell^{2}) \equiv \begin{cases}
	b(\ell^{2}n) + b(n) \equiv 1+1 \equiv 0\pmod 2 & \text{if } n=m^{2}, m \text{ odd}, (m,\ell)=1 \\
	b(\ell^{2}n) + b(m^{2}) \equiv 1+1 \equiv 0 \pmod{2} & \text{if } n=\ell^{2}m^{2}, m \text{ odd} \\
	b(\ell^{2}n) + \pfrac{-n}{\ell} b(n) \equiv 0 + 0 \equiv 0 \pmod{2} & \text{if } n\neq \text{odd square}.
	 \end{cases}
\]
Congruence \eqref{eq:theta-pbar-psi-mod-2} follows, and Theorem \ref{thm:pbar} is proved.\qed

\end{section}

\begin{section}{Proof of theorems \ref{thm:spt1bar} and \ref{thm:m2spt} modulo 3 and 5}
In this section we prove the congruences \eqref{eq:spt1bar-mod-2} and \eqref{eq:m2spt-mod-t} for the moduli 3 and 5. The proofs use similar techniques, so we present them together. 
Let $\ell$ be an odd prime and define $t_{\ell} := 15/(15,\ell)$.

Recalling that $\Sbar(\tau) = \sum_{n}\sptbar(n) q^{n}$, we find that Theorem \ref{thm:spt1bar} for the moduli $t=3,5$ is equivalent to
\begin{gather} \label{eq:s-ell-mod-t-ell}
	\Sbar_{\ell}(\tau):= \Sbar\T{\frac{3}{2}}{\ell^{2}}(\tau) - (1+\ell)\Sbar(\tau) \equiv 0 \pmod{t_{\ell}}.
\end{gather}
We will need the following lemma.

\begin{lem} \label{4e2-3e-rho-mod-45} $4E_{2}(2\tau) - E_{2}(\tau) \equiv 3E(\tau)\rho(\tau) \pmod{45}$.
\end{lem}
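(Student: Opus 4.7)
The plan is to invoke the Chinese Remainder Theorem with $45 = 5 \cdot 9$ and prove the congruence modulo each factor separately, since the two cases call for rather different techniques.

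Modulo $5$, I use the classical congruence $E_2 \equiv E_6 \pmod 5$, which follows from Ramanujan's identity $E_2 E_4 - E_6 = 3\theta E_4$ together with $240 \equiv 0 \pmod 5$ (so $E_4 \equiv 1$ and $3\theta E_4 \equiv 0 \pmod 5$). This replaces $4E_2(2\tau) - E_2(\tau)$ by the weight-$6$ modular form $4E_6(2\tau) - E_6(\tau)$, which together with $3E\rho$ lies in $M_6(\Gamma_0(2))$. Sturm's bound for $M_6(\Gamma_0(2))$ requires only the constant and the $q$-coefficient, which are $3$ and $24$ on both sides, so the mod-$5$ congruence is immediate.

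Modulo $9$ the argument is more delicate because $E_2$ admits no clean weight-$2$ modular lift mod $9$, so Sturm's bound cannot be applied directly; instead I proceed by a chain of divisibility reductions. Since every nonconstant coefficient of $E$ is divisible by $24$, we have $3\rho(E - 1) \equiv 0 \pmod{72}$, so $3E\rho \equiv 3\rho \pmod 9$; the claim reduces to $4E_2(2\tau) - E_2(\tau) \equiv 3\rho \pmod 9$, and (both sides being divisible by $3$ as $q$-series) further to $(4E_2(2\tau) - E_2(\tau))/3 \equiv \rho \pmod 3$. Expanding the left-hand side via $\sum \sigma_1(n) q^n = (1-E_2)/24$ and using Fermat's little theorem ($d^3 \equiv d \pmod 3$, hence $\sigma_3 \equiv \sigma_1 \pmod 3$), the claim becomes
\[
\rho \equiv 1 + \frac{E_4(2\tau) - E_4(\tau)}{240} \pmod 3.
\]

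To verify this final identity, I use the basis expansion $15\rho = 16E_4(2\tau) - E_4(\tau)$ obtained by writing $\rho \in M_4(\Gamma_0(2))$ in the basis $\{E_4(\tau), E_4(2\tau)\}$. Multiplying by $16$ and rearranging gives
\[
240\rho - 240 - \bigl(E_4(2\tau) - E_4(\tau)\bigr) = 15\bigl(17E_4(2\tau) - E_4(\tau) - 16\bigr).
\]
The parenthesized expression on the right has constant term $0$, and its other coefficients are integer multiples of $240$ (since every nonconstant coefficient of $E_4$ is); the whole right-hand side is therefore a multiple of $15 \cdot 240 = 3600$, hence $\equiv 0 \pmod{720}$, and dividing through by $240$ yields the desired mod-$3$ identity. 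The main obstacle is precisely the mod-$9$ weight mismatch, which forces this divisibility-based workaround—rooted in $24 \mid (E-1)$ and $240 \mid (E_4-1)$—in place of a clean Sturm-bound comparison.
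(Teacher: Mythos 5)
Your proof is correct, but it takes a genuinely different route from the paper's. The paper observes that $E\rho\in M_{6}(\Gamma_{0}(2))=\langle E_{6}(\tau),E_{6}(2\tau)\rangle$ and nails down the \emph{exact} identity $3E(\tau)\rho(\tau)=\tfrac{1}{21}\left(64E_{6}(2\tau)-E_{6}(\tau)\right)=3-24\sum\left(64\sigma_{5}(n/2)-\sigma_{5}(n)\right)q^{n}$ from two coefficients; the congruence mod $45$ then falls out in one stroke, since $d^{5}\equiv d\pmod{15}$ gives $\sigma_{5}\equiv\sigma_{1}\pmod{15}$, $64\equiv 4\pmod{15}$, and the common factor $24$ supplies the remaining $3$. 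You instead split $45=5\cdot 9$: your mod-$5$ half is close in spirit to the paper (a weight-$6$ comparison, reached via $E_{2}\equiv E_{6}\pmod 5$ rather than the exact identity), while your mod-$9$ half goes through the weight-$4$ identity $15\rho=16E_{4}(2\tau)-E_{4}(\tau)$ and some careful divisibility bookkeeping. Both halves check out (I verified the reductions and the final $240$-divisibility step), though your stated motivation for the split — that the weight mismatch blocks a direct Sturm argument mod $9$ — is something of a red herring: one never needs a modular lift of $E_{2}$, only the exact expansion of $3E\rho$, and then $\sigma_{5}\equiv\sigma_{1}\pmod{15}$ handles both primes uniformly. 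One small imprecision: the $q$-coefficient of $4E_{6}(2\tau)-E_{6}(\tau)$ is $504$, not $24$; of course $504\equiv 24\pmod 5$, which is all your argument requires. What your approach buys is that each step is elementary and uses only weight-$2$ and weight-$4$ facts; what it costs is length and the need for two separate arguments where the paper needs one.
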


\begin{proof}
Since $E(\tau)\rho(\tau) \in M_{6}(\Gamma_{0}(2))$, it is a linear combination of $E_{6}(\tau)$ and $E_{6}(2\tau)$. We compute that
\begin{gather*}
	3E(\tau)\rho(\tau) = \frac{1}{21}\left( 64E_{6}(2\tau) - E_{6}(\tau) \right) = 
	3- 24\sum_{n=1}^{\infty}\left( 64\sigma_{5}(n/2) - \sigma_{5}(n) \right) q^{n} \\
	\equiv 4 E_{2}(2\tau) - E_{2}(\tau) \pmod{45}. \qedhere
\end{gather*}
\end{proof}

We apply Lemma \ref{4e2-3e-rho-mod-45} to equation \eqref{eq:mbar-def} to obtain
\begin{gather*}
	\Mbar(\tau) - \Sbar(\tau) \equiv -\frac{1}{4}\Pbar(\tau) \cdot E(\tau)\rho(\tau) 
			\equiv 11 \Pbar^{-7}(\tau) E(\tau) \pmod{15}.
\end{gather*}
We omit the proof of the next proposition since it is similar to the proof of Proposition \ref{prop:pbar-7-eigenform}.

\begin{prop} \label{prop:p-7-e-eigenform} Let $\ell$ be an odd prime. Then $E(\tau)\Pbar^{-7}(\tau)$ is an eigenform for $\t{\frac{11}{2}}{\ell^{2}}$ with eigenvalue $\ell^{9}+1$.
\end{prop}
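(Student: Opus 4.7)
The plan is to follow the strategy of Proposition \ref{prop:pbar-7-eigenform} essentially verbatim. Since $\w{}{16}$ commutes with $\t{\frac{11}{2}}{\ell^{2}}$ for odd $\ell$, it suffices to show that the Fricke image $(E\Pbar^{-7})\W{\frac{11}{2}}{16}$ is an eigenform with the desired eigenvalue. The Fricke slash is multiplicative on products, so
\[
	(E\Pbar^{-7})\W{\frac{11}{2}}{16} = (E\W{2}{16})\cdot(\Pbar^{-7}\W{\frac{7}{2}}{16}).
\]
Using \eqref{eq:r-fricke-p} one obtains $\Pbar^{-7}\W{\frac{7}{2}}{16}=8^{7/2}R^{-7}$, and a short calculation applying $E_{2}(-1/z)=z^{2}E_{2}(z)-6iz/\pi$ to $E(\tau)=2E_{2}(2\tau)-E_{2}(\tau)$ gives $E\W{2}{16}(\tau)=8E(8\tau)$. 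Hence
\[
	(E\Pbar^{-7})\W{\frac{11}{2}}{16} = 8^{9/2}\,E(8\tau)R^{-7}(\tau),
\]
and by the Hecke--Fricke commutation it suffices to prove that $\tilde F(\tau):=E(8\tau)R^{-7}(\tau)$ is a Hecke eigenform for $\t{\frac{11}{2}}{\ell^{2}}$.

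Next, $\tilde F \in M_{\frac{11}{2}}(\Gamma_{0}(16))$, and since $E(8\tau)$ is supported on exponents divisible by $8$ while $R^{-7}$ is supported on exponents $\equiv 7 \pmod 8$, the product $\tilde F$ is supported only on exponents $\equiv 7 \pmod 8$, with $q$-expansion $\tilde F = q^{7} + O(q^{15})$. Because $\ell^{2}\equiv 1\pmod 8$ for odd $\ell$, the same residue-class analysis shows $\tilde F\T{\frac{11}{2}}{\ell^{2}}$ is likewise supported on exponents $\equiv 7 \pmod 8$. I would now invoke the Cohen--Oesterle formula to compute $D:=\dim M_{\frac{11}{2}}(\Gamma_{0}(16))$ and exhibit an echelon basis $\{g_{m}\}_{m=0}^{D-1}$ with $g_{m}(\tau) = q^{m} + O(q^{D})$; this can be built by multiplying the known basis $\{f_{m}\}_{m=0}^{7}$ of $M_{\frac{7}{2}}(\Gamma_{0}(16))$ from Proposition \ref{prop:pbar-7-eigenform} by a basis of $M_{2}(\Gamma_{0}(16))$. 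Assuming $D\leq 15$, the only index $m\in\{0,\dots,D-1\}$ congruent to $7$ modulo $8$ is $m=7$, so $\tilde F = g_{7}$, and reading off the first $D$ Fourier coefficients of $\tilde F\T{\frac{11}{2}}{\ell^{2}}$ forces $\tilde F\T{\frac{11}{2}}{\ell^{2}} = \lambda_{\ell}\,g_{7} = \lambda_{\ell}\tilde F$ for some integer $\lambda_{\ell}$.

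To pin down $\lambda_{\ell}$, transport back across Fricke: the eigenform relation then reads $E\Pbar^{-7}\T{\frac{11}{2}}{\ell^{2}} = \lambda_{\ell}\,E\Pbar^{-7}$, and since $E\Pbar^{-7}=1+O(q)$ the constant term on the right is $\lambda_{\ell}$. On the other hand, \eqref{eq:hecke-def} at $n=0$ (with $\lambda=5$) gives this constant term as $1+\ell^{2\lambda-1} = \ell^{9}+1$.

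The main obstacle is the dimension/basis verification: one must confirm $D\leq 15$ via Cohen--Oesterle and produce a concrete echelon basis of $M_{\frac{11}{2}}(\Gamma_{0}(16))$, for only then does the argument forcing $\tilde F\T{\frac{11}{2}}{\ell^{2}} \in \C\cdot\tilde F$ go through. This is a finite but bookkeeping-intensive computation, facilitated by the already-established basis of $M_{\frac{7}{2}}(\Gamma_{0}(16))$ and the small-dimensional space $M_{2}(\Gamma_{0}(16))$; once the basis is in hand, the eigenform argument is completely parallel to that of Proposition \ref{prop:pbar-7-eigenform}.
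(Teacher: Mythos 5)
Your proposal is correct and is essentially the argument the paper intends: the paper omits this proof, saying only that it is similar to the proof of Proposition~\ref{prop:pbar-7-eigenform}, and your write-up supplies exactly that argument (Fricke transport to $E(8\tau)R^{-7}(\tau)$, support on exponents $\equiv 7 \pmod 8$, an echelon basis of $M_{\frac{11}{2}}(\Gamma_{0}(16))$, and the constant term of $E\Pbar^{-7}$ giving $\ell^{9}+1$). The one assumption you flag does check out --- Cohen--Oesterl\'e gives $\dim M_{\frac{11}{2}}(\Gamma_{0}(16)) = 12 \le 15$ --- and your auxiliary identities $E\W{2}{16} = 8E(8\tau)$ and $\Pbar^{-7}\W{\frac{7}{2}}{16} = 8^{7/2}R^{-7}$ are both correct.
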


Since $\ell^{4}\equiv 1\pmod{t_{\ell}}$, the Hecke operators $\t{\frac{3}{2}}{\ell^{2}}$ and $\t{\frac{11}{2}}{\ell^{2}}$ agree modulo $t_{\ell}$. The function $\Mbar$ is an eigenform for $\t{\frac{3}{2}}{\ell^{2}}$, therefore
\[
	\Mbar\T{\frac{11}{2}}{\ell^{2}} \equiv (1+\ell)\Mbar \pmod {t_{\ell}},
\]
from which we can conclude
\begin{gather*}
	\Sbar_{\ell} \equiv -\left( 11(\ell^{9}+1)\Pbar^{-7}E - 11(\ell+1)\Pbar^{-7}E \right) 
	\equiv 4\ell (\ell^{8}-1) \Pbar^{-7}(\tau) E(\tau) 
	\equiv 0\pmod{t_{\ell}}.
\end{gather*}
This completes the proof of \eqref{eq:s-ell-mod-t-ell}.

To prove Theorem \ref{thm:m2spt} for the moduli $t=3,5$ we will prove that
\begin{equation} \label{eq:s2-ell-mod-t-ell}
	S2_{\ell}(\tau):= S2\T{\frac{3}{2}}{\ell^{2}}(\tau) - (1+\ell)S2(\tau) \equiv 0 \pmod{t_{\ell}},
\end{equation}
where we recall that $S2(\tau) = \sum_{n}(-1)^{n}\mspt(n) q^{8n-1}$. We omit the proof of the next lemma since it is similar to the proof of Lemma \ref{4e2-3e-rho-mod-45}.

\begin{lem} \label{lem:e2-e-psi-mod-45} 
$E_{2}(2\tau) - E_{2}(\tau) \equiv 24\,E(\tau)\psi(\tau) \pmod{45}$.
\end{lem}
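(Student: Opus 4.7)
The plan is to mimic the proof of Lemma \ref{4e2-3e-rho-mod-45}, with $\psi$ in place of $\rho$. The key modular-form input is that $E(\tau)\psi(\tau) \in M_6(\Gamma_0(2))$, and a dimension count (Riemann--Roch on the genus-$0$ curve $X_0(2)$ with one elliptic point of order $2$ and two cusps) shows this space is two-dimensional with basis $\{E_6(\tau),E_6(2\tau)\}$ --- weight-$6$ cusp forms on $\Gamma_0(2)$ do not exist. The key arithmetic input is the congruence $\sigma_5(n) \equiv \sigma_1(n) \pmod{15}$, which follows from Fermat's little theorem at the primes $3$ and $5$.

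First I would determine constants $c_1, c_2 \in \Q$ such that $24\,E(\tau)\psi(\tau) = c_1 E_6(\tau) + c_2 E_6(2\tau)$. Since $\psi$ vanishes at the cusp $\infty$, so does $E\psi$, and matching constant terms forces $c_1 + c_2 = 0$; matching the coefficient of $q$ (using $E\psi = q + \cdots$ and $E_6 = 1 - 504q - \cdots$) then gives $c_1 = -c_2 = -1/21$. Hence
\[
    24\,E(\tau)\psi(\tau) \;=\; \tfrac{1}{21}\bigl(E_6(2\tau) - E_6(\tau)\bigr) \;=\; 24\sum_{n \geq 1}\bigl(\sigma_5(n) - \sigma_5(n/2)\bigr)q^n.
\]

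Next I would compare with the direct expansion $E_2(2\tau) - E_2(\tau) = 24\sum_{n\geq 1}\bigl(\sigma_1(n) - \sigma_1(n/2)\bigr)q^n$. The $n$-th coefficient of the difference is
\[
    24\bigl[(\sigma_5(n) - \sigma_1(n)) - (\sigma_5(n/2) - \sigma_1(n/2))\bigr],
\]
which by the Fermat congruence is divisible by $24 \cdot 15 = 360$, hence by $45$. The main obstacle, such as it is, is nailing down the two constants $c_1, c_2$; everything else is automatic once the analogy with Lemma \ref{4e2-3e-rho-mod-45} and the $d^5 \equiv d \pmod{15}$ trick driving it are in hand.
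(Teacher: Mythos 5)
Your proof is correct and follows exactly the route the paper intends: the paper omits this proof, noting only that it is ``similar to the proof of Lemma \ref{4e2-3e-rho-mod-45},'' and that proof likewise expresses the weight-$6$ form on $\Gamma_0(2)$ as $\frac{1}{21}$ times a combination of $E_6(\tau)$ and $E_6(2\tau)$ and then invokes $\sigma_5(n)\equiv\sigma_1(n)\pmod{15}$. Your identification $24\,E\psi=\frac{1}{21}\bigl(E_6(2\tau)-E_6(\tau)\bigr)$ checks out (the constant and $q$-coefficients determine the form in the two-dimensional space $M_6(\Gamma_0(2))$), and the resulting divisibility by $24\cdot 15=360$ gives the claimed congruence modulo $45$ with room to spare.
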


We apply Lemma \ref{lem:e2-e-psi-mod-45} to \eqref{eq:m2-def} to obtain
\begin{gather*}
M2(\tau) - S2(\tau) \equiv R(\tau) \cdot E(8\tau)\psi(8\tau)
			\equiv R(\tau)^{-7} E(8\tau) \pmod{15}.
\end{gather*}
Since $R(\tau)^{-7}E(8\tau) \W{\frac{11}{2}}{16} = 2^{-27/2}E(\tau)\Pbar^{-7}(\tau)$, and since $\w{}{16}$ commutes with $\t{}{\ell^{2}}$ we have, by Proposition \ref{prop:p-7-e-eigenform}, that $E(8\tau)R(\tau)^{-7}$ is an eigenform for $\t{\frac{11}{2}}{\ell^{2}}$ with eigenvalue $\ell^{9}+1$. As in the case of $\Mbar$, we are justified in switching the weight from $\frac{3}{2}$ to $\frac{11}{2}$, and since $M2$ is an eigenform for $\t{\frac{3}{2}}{\ell^{2}}$, we have
\[
	M2\T{\frac{11}{2}}{\ell^{2}} \equiv (1+\ell)\Mbar \pmod {t_{\ell}},
\]
from which we can conclude
\begin{gather*}
	S2_{\ell}(\tau) \equiv -\left[ (\ell^{9}+1)R(\tau)^{-7}E(8\tau) - (\ell+1)R(\tau)^{-7}E(8\tau) \right] 
	\equiv -\ell (\ell^{8}-1) R(\tau)^{-7}E(8\tau)
	\equiv 0\pmod{t_{\ell}}.
\end{gather*}
This completes the proof of \eqref{eq:s-ell-mod-t-ell}. \qed
\end{section}

\begin{section}{Proof of Theorem \ref{thm:spt1bar} modulo powers of 2} \label{sec:spt1bar}
Recalling the definition of $\Sbar_{\ell}$ in \eqref{eq:s-ell-mod-t-ell}, we must prove that
\[
	\Sbar_{\ell}(\tau) \equiv 0 \pmod{2^{\alpha}},
\]
where
\[
	\alpha = \begin{cases} 
					6 & \text{if }\ell \equiv 3\pmod 8 \\
					7 & \text{if }\ell \equiv 5,7\pmod{8} \\
					8 & \text{if }\ell \equiv 1 \pmod{8}. \end{cases} 
\]
By \eqref{eq:mbar-def} and \eqref{eq:rho-deriv} we obtain
\begin{equation}
	\Mbar(\tau) - \Sbar(\tau) = 2 \theta \Pbar(\tau) - \frac{1}{4}\hbar(\tau),
\end{equation}
where $\hbar(\tau)$ is the weight 3/2 weakly holomorphic modular form
\[
	\hbar(\tau) := E(\tau)\frac{\eta(2\tau)}{\eta^{2}(\tau)} \in M_{\frac{3}{2}}^{!}(\Gamma_{0}(16)).
\]
Using \eqref{eq:theta-hecke} and the fact that $\Mbar \T{\frac{3}{2}}{\ell^{2}}(\tau) = (1+\ell)\Mbar(\tau)$ we obtain
\begin{align} \label{eq:spt1bar-ell-h-p}
	\Sbar_{\ell}(\tau)
	= \frac{1}{4}\left( h \T{\frac{3}{2}}{\ell^{2}}(\tau) - (1+\ell)\hbar(\tau) \right) - 2\theta\left( \ell^{2} \Pbar \T{-\frac{1}{2}}{\ell^{2}}(\tau) - (1+\ell) \Pbar(\tau) \right),
\end{align}
so to prove \eqref{eq:spt1bar-mod-2} it is enough to prove the statements
\begin{gather}
	\hbar \T{\frac{3}{2}}{\ell^{2}}(\tau) - (1+\ell)\hbar(\tau) \equiv 0 \pmod{256}, \\ 
	\theta\left( \ell^{2}\Pbar \T{-\frac{1}{2}}{\ell^{2}} - (1+\ell) \Pbar(\tau) \right) \equiv 0 \pmod{2^{\alpha-1}}.
\end{gather}
The second statement has been established in \eqref{eq:theta-pbar-hecke-mod-64}, and the first statement follows from the next proposition, which completes the proof of Theorem \ref{thm:spt1bar}.

\begin{prop}
	\begin{equation} \label{eq:h-ell-mod-4096}
		\hbar \T{\frac{3}{2}}{\ell^{2}}(\tau) - (1+\ell)\hbar(\tau) \equiv 0 \pmod {2^{12}}.
	\end{equation}
\end{prop}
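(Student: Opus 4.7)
My plan is to mirror the proof of Proposition~\ref{prop:pbar-31-eigenform}, with $E\Pbar^{-2047}$ playing the role of $\Pbar^{-31}$. Since $\rho \equiv 1 \pmod{16}$, write $\rho = 1 + 16\nu$ with $\nu$ a power series over $\Z$; the binomial theorem then yields $\rho^{2^{j}} \equiv 1 \pmod{2^{j+4}}$ by induction on $j$. Taking $j=8$ gives $\rho^{256} \equiv 1 \pmod{2^{12}}$, and since $\hbar\cdot\rho^{256} = E\Pbar\cdot\Pbar^{-2048} = E\Pbar^{-2047}$, we obtain
\[
    \hbar(\tau) \;\equiv\; E(\tau)\,\Pbar^{-2047}(\tau) \pmod{2^{12}}.
\]
The group $(\Z/2^{12})^{\times}$ has exponent $2^{10}$, so for any odd $\ell$ we have $\ell^{1024} \equiv 1$ and $\ell^{2049} \equiv \ell$ modulo $2^{12}$. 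The first congruence, applied coefficient-wise to \eqref{eq:hecke-def}, shows that $\t{\frac{3}{2}}{\ell^{2}}$ and $\t{\frac{2051}{2}}{\ell^{2}}$ agree modulo $2^{12}$ on any $q$-series, while the second gives the eigenvalue reduction $1+\ell^{2049}\equiv 1+\ell$. It therefore suffices to prove the following eigenform claim: \emph{$E\Pbar^{-2047}$ is an eigenform of $\t{\frac{2051}{2}}{\ell^{2}}$ modulo $2^{12}$ with eigenvalue $1+\ell^{2049}$}.

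To prove this claim I directly imitate the proof of Proposition~\ref{prop:pbar-31-eigenform}. Apply $\t{\frac{2051}{2}}{\ell^{2}}$ to the Fricke dual $E(8\tau)R^{-2047}(\tau) \in M_{\frac{2051}{2}}^{!}(\Gamma_{0}(16))$, and multiply by $R^{7}(\tau)$; the result lies in $M_{1022}^{!}(\Gamma_{0}(16))$, is supported on exponents divisible by $8$ (since $R^{-2047}$ has support $\equiv 7 \pmod 8$ and $R^{7}$ has support $\equiv 1\pmod 8$), and is holomorphic at $\infty$ (since $R^{-2047}$ vanishes there to order $2047$). By Atkin--Lehner Lemma~7 it descends under $\tau\mapsto\tau/8$ to a form in $M_{1022}^{!}(\Gamma_{0}(2))$, and a Fricke-$W_{2}$ computation shows holomorphy at the cusp $0$. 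So it lies in $M_{1022}(\Gamma_{0}(2))$, which has basis $\{E\,\rho^{i}\,\psi^{255-i} : 0 \leq i \leq 255\}$. Expanding in this basis, substituting $\psi(8\tau) = R^{-8}(\tau)$ to return to level $16$, and applying the Fricke involution $W_{16}$ (using $R^{-k}\W{\frac{k}{2}}{16} = 8^{-k/2}\Pbar^{-k}$, $\rho(8\tau)\W{4}{16} = \psi$, and $E(8\tau)\W{2}{16} = E/8$) together with the $8^{2049/2}$ normalization that converts $E(8\tau)R^{-2047}$ to $E\Pbar^{-2047}$, the calculation yields
\[
    E\Pbar^{-2047}\T{\frac{2051}{2}}{\ell^{2}}(\tau) \;=\; \sum_{i=0}^{255} 2^{12i}\,d_{i}\,E(\tau)\,\psi^{i}(\tau)\,\Pbar^{-(2047-8i)}(\tau).
\]
Modulo $2^{12}$ only the $i=0$ summand survives, and comparing constant terms identifies $d_{0} = 1+\ell^{2049}$.

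The main obstacle is the Fricke-$W_{2}$ verification of holomorphy at the cusp~$0$ (at weight $12$ this was the principal technical content of Proposition~\ref{prop:pbar-31-eigenform}), together with the bookkeeping of the $2^{12i}$ factors across the $256$-dimensional basis of $M_{1022}(\Gamma_{0}(2))$. The required Fricke identities for $R$, $\rho$, $\psi$, and $E$ are immediate $\eta$-product computations of the kind already used in the paper, so the remaining steps are a mechanical adaptation of the earlier argument.
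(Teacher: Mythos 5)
Your argument is correct, but it takes a genuinely different route from the paper's. The paper works entirely on the Fricke side at weight $3/2$: it sets $\gbar := 8^{-1/2}\,\hbar\W{\frac{3}{2}}{16} = E(8\tau)\eta(8\tau)/\eta^{2}(16\tau)$, quotes Ahlgren--Kim to write $\gbar\T{\frac{3}{2}}{\ell^{2}} - \gbar = \ell\bigl(1+\sum c_{n}j_{2}^{n}(8\tau)\bigr)\gbar$ with $j_{2}$ the Hauptmodul on $\Gamma_{0}(2)$, and then applies $\w{}{16}$ so that $j_{2}(-1/2\tau) = 2^{12}j_{2}^{-1}(\tau)$ annihilates every term but the first modulo $2^{12}$. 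You instead scale up the method of Proposition \ref{prop:pbar-31-eigenform}: the congruence $\hbar\equiv E\Pbar^{-2047}\pmod{2^{12}}$ (your induction $\rho^{2^{j}}\equiv 1\pmod{2^{j+4}}$ is right), the weight switch (valid since $\lambda=1$ and $\lambda=1025$ are both odd, so both operators carry $\chi^{*}=\pfrac{-1}{\bullet}$, and $\ell^{1024}\equiv 1\pmod{2^{12}}$; note this step needs the coefficients to be $2$-integral, which they are), the basis $\{E\rho^{i}\psi^{255-i}\}$ of the $256$-dimensional space $M_{1022}(\Gamma_{0}(2))$, the normalization $8^{2049/2}$, and the identity $\rho(8\tau)\W{4}{16}=\psi$ producing the factors $2^{12i}$ all check out, and comparing constant terms does give $d_{0}=1+\ell^{2049}$. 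Two small points: holomorphy of $\tilde{R}_{\ell}$ at $\infty$ follows not from the order of vanishing $2047$ alone (the Hecke image can have much smaller order for large $\ell$) but from your support observation, which forces the Hecke image to have order at least $7$; and the $d_{i}$ are integers because the basis is echelonized with unit leading coefficients. Despite the alarming dimension, only $d_{0}$ is ever needed, so no large computation occurs; what your route buys is self-containment (no appeal to Ahlgren--Kim's Corollary 4), at the cost of the weight-switching bookkeeping and the routine-but-unwritten Fricke check of holomorphy at the cusp $0$ --- an omission the paper's own Proposition \ref{prop:pbar-31-eigenform} also makes. It is notable that both proofs ultimately extract the modulus $2^{12}$ from the same $\eta$-quotient arithmetic, via $j_{2}(-1/2\tau)=2^{12}j_{2}^{-1}(\tau)$ in the paper and via $\rho = 2^{6}\,\psi\W{4}{2}$ (hence $8^{4i}=2^{12i}$) in yours.
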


\begin{proof}
Define $\gbar(\tau) \in M_{\frac{3}{2}}^{!}(\Gamma_{0}(16))$ by
\[
	\gbar(\tau) := \frac{1}{\sqrt{8}}\hbar(\tau) \W{\frac{3}{2}}{16} = E(8\tau)\frac{\eta(8\tau)}{\eta^{2}(16\tau)}.
\]
From~\cite[Corollary~4]{AhlgrenKim}, we have
\[
	\gbar \T{\frac{3}{2}}{\ell^{2}} (\tau) - \gbar (\tau) = \ell f_{\ell^{2}}(\tau)
\]
where $f_{\ell^{2}}(\tau)$ is given by
\[
	f_{\ell^{2}}(\tau) = \left( 1+ \sum_{n=1}^{(\ell^{2}-1)/8} c_{n} j_{2}^{n}(8\tau) \right) \gbar(\tau)
\]
for some $c_{n} \in \Z$, and $j_{2}(\tau)$ is the Hauptmodul on $\Gamma_{0}(2)$ given by
\[
	j_{2}(\tau) := \left(\frac{\eta(\tau)}{\eta(2\tau)}\right)^{24} \in M_{0}^{!}(\Gamma_{0}(2)).
\]
Applying the Fricke involution $\w{}{16}$ and multiplying by $\sqrt 8$, we obtain
\[
	\hbar(\tau) \T{\frac{3}{2}}{\ell^{2}} - \hbar(\tau) = \ell\left(1+\sum_{n=1}^{s_{\ell}}c_{n}j_{2}^{n}(-1/2\tau)\right) \hbar(\tau).
\]
Since $j_{2}(-1/2\tau) = 2^{12} \, j_{2}^{-1}(\tau)$, we conclude that
\[
	\hbar(\tau) \T{\frac{3}{2}}{\ell^{2}} - \hbar(\tau) \equiv \ell \, \hbar(\tau) \pmod {2^{12}},
\]
from which \eqref{eq:h-ell-mod-4096} follows.
\end{proof}
\end{section}

\begin{section}{Proof of Theorem \ref{thm:m2spt} modulo powers of 2}
Recalling the definition of $S2_{\ell}$ in \eqref{eq:s2-ell-mod-t-ell}, we must prove that
\begin{gather}
	S2_{\ell}(\tau) \equiv 0\pmod {2^{\beta}},
\end{gather}
where
\[
	\beta := \begin{cases} 
					1 & \text{if } \ell \equiv 3\pmod{8} \\
					2 & \text{if } \ell \equiv 5\pmod{8} \\
					3 & \text{if } \ell \equiv 1,7 \pmod{8}. \end{cases}
\]
We will need the following lemma.
\begin{lem} \label{lem:e2-e-psi-mod-64} 
$E_{2}(2\tau) - E_{2}(\tau) \equiv 24\, \psi(\tau) - 16\, \psi^{2}(\tau) + 32\, \psi^{4}(\tau) \pmod{64}$.
\end{lem}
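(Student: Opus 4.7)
\emph{Proof plan.} The strategy is to verify this congruence as a $q$-series identity modulo 64, reducing it to classical 2-adic properties of divisor sums.

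First I would expand both sides explicitly. Using $E_{2}(2\tau) - E_{2}(\tau) = 24\sum_{n\geq 1}(\sigma_{1}(n) - \sigma_{1}(n/2))q^{n}$ and the identity $\psi(\tau) = \frac{1}{240}(E_{4}(\tau) - E_{4}(2\tau)) = \sum_{n\geq 1}(\sigma_{3}(n) - \sigma_{3}(n/2))q^{n}$ already used in Section 3, and writing $n = 2^{a}m$ with $m$ odd, multiplicativity gives $\sigma_{k}(n) - \sigma_{k}(n/2) = 2^{ka}\sigma_{k}(m)$ for $k=1,3$. Combined with the classical congruence $\sigma_{3}(m) \equiv \sigma_{1}(m) \pmod{8}$ for odd $m$ (a consequence of $d^{2} \equiv 1 \pmod 8$ for odd $d$), this yields modulo 64 the reduction
\begin{gather*}
E_{2}(2\tau) - E_{2}(\tau) \equiv 24\tilde A(\tau) + 48\tilde A(2\tau) + 32\tilde A(4\tau) \pmod{64},
\end{gather*}
where $\tilde A(\tau) := \sum_{m\text{ odd}}\sigma_{1}(m)q^{m}$, along with $24\psi \equiv 24\tilde A$, $16\psi^{2} \equiv 16\tilde A^{2}$, and $32\psi^{4} \equiv 32\tilde A^{4}$ modulo 64. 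After cancelling the common $24\tilde A$ and dividing by 16, the lemma becomes equivalent to
\begin{gather*}
\tilde A^{2} + 3\tilde A(2\tau) + 2\tilde A(4\tau) - 2\tilde A^{4} \equiv 0 \pmod 4.
\end{gather*}

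Next I would apply the Frobenius-type congruence $\tilde A^{2} \equiv \tilde A(2\tau) \pmod 2$, which lets me write $\tilde A^{2} = \tilde A(2\tau) + 2C$ for an integral $q$-series $C$. Expanding gives $\tilde A^{4} = \tilde A(2\tau)^{2} + 4\tilde A(2\tau)C + 4C^{2} \equiv \tilde A(2\tau)^{2} \pmod 4$, and substituting $q \mapsto q^{2}$ in the identity defining $C$ yields $\tilde A(2\tau)^{2} = \tilde A(4\tau) + 2C(2\tau)$. Putting these into the displayed congruence, all the explicit $\tilde A$-terms cancel and it collapses modulo 4 to $2C \equiv 0 \pmod 4$, that is, $\tilde A^{2} \equiv \tilde A(2\tau) \pmod 4$.

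The main obstacle is this final congruence. The coefficient of $q^{2m}$ in $\tilde A^{2} - \tilde A(2\tau)$ equals $\sigma_{1}(m)(\sigma_{1}(m)-1)[m\text{ odd}] + 2\sum_{i<m,\, i\text{ odd}}\sigma_{1}(i)\sigma_{1}(2m-i)$, which must be divisible by 4. Modulo 4, the first summand is nonzero precisely when $\sigma_{1}(m) \equiv 2,3 \pmod 4$, while the second summand modulo 4 equals $2$ times the number of unordered pairs of distinct positive odd squares summing to $2m$, since $\sigma_{1}(\ell)$ is odd for odd $\ell$ exactly when $\ell$ is a perfect square. Matching these two contributions is a parity identity stemming from Jacobi's two-squares formula for representations of $2m$; carrying out this combinatorial cancellation is the real content of the proof.
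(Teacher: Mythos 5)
Your route is genuinely different from the paper's: the paper replaces $E_{2}(2\tau)-E_{2}(\tau)$ by the level-$128$ holomorphic form $g(\tau)=E_{2}(2\tau)-64E_{2}(128\tau)+64E_{2}(64\tau)-E_{2}(\tau)$ and then verifies the congruence by checking finitely many coefficients of an auxiliary form in $M_{16}(\Gamma_{0}(128))$; your reduction is elementary and $q$-series--theoretic, and all of your intermediate steps check out. In particular the expansion $E_{2}(2\tau)-E_{2}(\tau)=24\sum_{a\ge 0}2^{a}\tilde A(2^{a}\tau)$, the replacement of $\psi$ by $\tilde A$ to the required $2$-adic accuracy via $\sigma_{3}(m)\equiv\sigma_{1}(m)\pmod 8$, and the telescoping with $\tilde A^{2}=\tilde A(2\tau)+2C$ are all correct, and they do reduce the lemma exactly to $\tilde A^{2}\equiv\tilde A(2\tau)\pmod 4$ (note $\tilde A$ is the form $F$ the paper introduces later). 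If completed, this argument is arguably more illuminating than the paper's brute-force check.

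However, as written there is a genuine gap: the final congruence $\tilde A^{2}\equiv\tilde A(2\tau)\pmod 4$ is precisely the content of the lemma after your reductions, and you do not prove it --- you only restate it as the parity identity $\tfrac{1}{2}\sigma_{1}(m)(\sigma_{1}(m)-1)+r(m)\equiv 0\pmod 2$ for odd $m$, where $r(m)$ counts unordered pairs of distinct odd squares summing to $2m$, and appeal vaguely to Jacobi's two-squares formula. That identity is true but is not an immediate corollary of Jacobi; it needs an argument. One way to close the gap: Jacobi gives $2r(m)+[m=\square]=\sum_{d\mid m}\chi_{-4}(d)$, and this sum vanishes when some prime $p\equiv 3\pmod 4$ divides $m$ to an odd power (in which case $\sigma_{1}(m)\equiv 0\pmod 4$ as well, since $4\mid 1+p$); otherwise, writing $m=\prod p_{i}^{e_{i}}\prod q_{j}^{2f_{j}}$ with $p_{i}\equiv 1$, $q_{j}\equiv 3\pmod 4$, one has $\sigma_{1}(m)\equiv\prod(e_{i}+1)=\sum_{d\mid m}\chi_{-4}(d)\pmod 4$, and a short case check on $\sigma_{1}(m)\bmod 4$ (odd exactly when $m$ is a square) finishes the claim. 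Alternatively, since $\tilde A^{2}\in M_{4}(\Gamma_{0}(4))$ and $\tilde A(2\tau)\in M_{4}(\Gamma_{0}(8))$, the congruence can be certified by a Sturm-bound coefficient check, but that concedes the computation you were trying to avoid. Without one of these, the proof is incomplete at its decisive step.
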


\begin{proof}
	If $m\in \Z$ then $E_{2}(\tau) - m E_{2}(m\tau) \in M_{2}(\Gamma_{0}(m))$. Therefore
	\[
		g(\tau) := E_{2}(2\tau) - 64E_{2}(128\tau) + 64E_{2}(64\tau) - E_{2}(\tau) \in M_{2}(\Gamma_{0}(128)),
	\]
and $g(\tau) \equiv E_{2}(2\tau) - E_{2}(\tau) \pmod{64}$. We compute that
\[
	g(\tau) \equiv \psi(\tau) - 16\, \psi^{2}(\tau) + 32\, \psi^{4}(\tau) \pmod{64}
\]
by computing sufficiently many terms of 
\[
	\tilde{g}(\tau) := g(\tau)E(\tau)\rho^{3}(\tau) - 24\psi(\tau)\rho^{3}(\tau) + 16\psi^{2}(\tau)\rho^{2}(\tau) - 32\psi^{4}(\tau) \in M_{16}(\Gamma_{0}(128))
\]
to see that $\tilde{g}(\tau) \equiv 0\pmod{64}$. Since $\tilde{g}(\tau) \equiv g(\tau) - 24\psi(\tau) + 16\psi^{2}(\tau) - 32\psi^{4}(\tau) \pmod{64}$, this completes the proof.

\end{proof}

We apply Lemma \ref{lem:e2-e-psi-mod-64} to \eqref{eq:m2-def}
along with the fact that $\psi(8\tau) = R^{-8}(\tau)$ to obtain
\[
	M2 - S2 \equiv R^{-7} - 2R^{-15} + 4R^{-31} \pmod{8}.
\]
Let $\ell$ be an odd prime. By Proposition \ref{prop:pbar-7-eigenform}, we have $R^{-7}\T{\frac{7}{2}}{\ell^{2}} =  (\ell^{5}+1) R^{-7}$.
Therefore
\[
	-S2_{\ell} \equiv 2\left( R^{-15}\T{\frac{15}{2}}{\ell^{2}} - (1+\ell)R^{-15} \right) + 4\left( R^{-31}\T{\frac{31}{2}}{\ell^{2}} - (1+\ell)R^{-31} \right) \pmod{8}.
\]
This proves Theorem \ref{thm:m2spt} when $\ell\equiv 3\pmod{8}$. Suppose now that $\ell\not\equiv 3\pmod{8}$. To simplify notation, let $\rho_{8} = \rho(8\tau)$. Recall \eqref{eq:r-15-hecke-r-7-rho-r-15} and the discussion that follows it, which together imply that for some integers $c_{0},c_{1},c_{2}$ depending on $\ell$, we have
\begin{equation}
	R^{-31}\T{\frac{31}{2}}{\ell^{2}} = c_{0}R^{-7}\rho_{8}^{3} + c_{1}R^{-15}\rho_{8}^{2} + c_{2}R^{-23}\rho_{8} + (\ell^{29}+1) R^{-31}. \label{eq:r-31-basis-exp}
\end{equation}
Similarly, it can be shown that there exists some $d_{0}\in \Z$ depending on $\ell$ such that
\begin{equation}
	R^{-15}\T{\frac{15}{2}}{\ell^{2}}(\tau) = d_{0} R^{-7}\rho_{8} + (\ell^{13}+1) R^{-15}, \label{eq:r-15-basis-exp}
\end{equation}
and since $\rho \equiv 1\pmod{8}$, we have
\begin{gather*}
	-S2_{\ell} \equiv 2\left( d_{0} R^{-7}\rho_{8} + (\ell^{13}-\ell)R^{-15} \right) + 4 \left( c_{0}R^{-7}\rho_{8}^{3} + c_{1}R^{-15}\rho_{8}^{2} + c_{2}R^{-23}\rho_{8} + (\ell^{29}-\ell) R^{-31} \right) \\
	\equiv 2 d_{0} R^{-7} + 4 \left( c_{0}R^{-7} + c_{1}R^{-15} + c_{2}R^{-23} \right) \pmod{8}.
\end{gather*}
The following proposition completes the proof.

\begin{prop} Let $c_{0},c_{1},c_{2},d_{0}$ be as above. Then the following are true:
\begin{enumerate}
	\item If $\ell\equiv 1\pmod{4}$ then $d_{0}\equiv 0\pmod{4}$.
	\item If $\ell\equiv 1\pmod{8}$ then $c_{0}\equiv 0\pmod{2}$.
	\item If $\ell\equiv 7\pmod{8}$ then $d_{0}\equiv 2c_{0} \pmod{4}$.
	\item If $\ell\equiv 1,7 \pmod{8}$ then $c_{1}\equiv c_{2} \equiv 0\pmod{2}$.
\end{enumerate}
\end{prop}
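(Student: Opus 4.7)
The plan is to extract $c_0, c_1, c_2, d_0$ as specific low-order Fourier coefficients of $R^{-15}\T{15/2}{\ell^2}$ and $R^{-31}\T{31/2}{\ell^2}$, and then to evaluate those coefficients in closed form using the Hecke-eigenform identity $R^{-7}\T{7/2}{\ell^2} = (\ell^5+1)R^{-7}$, which follows from Proposition~\ref{prop:pbar-7-eigenform} via the commutation of $\w{}{16}$ with the Hecke operator.

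First I would use $\rho(8\tau) \equiv 1 \pmod{16}$ (which follows from $\rho = \Pbar^{-8}$ and $\Pbar \equiv 1 \pmod 2$) to reduce \eqref{eq:r-15-basis-exp} and \eqref{eq:r-31-basis-exp} to their $\rho_8$-free parts modulo $16$. Matching the $q^7$-coefficient of \eqref{eq:r-15-basis-exp}---which is clean because $R^{-7}\rho(8\tau) = q^7 + O(q^{15})$ while $R^{-15}$ vanishes to order $15$---yields the exact identity
\[
d_0 = a_{-15}(7\ell^2), \qquad a_{-k}(n) := [q^n]R^{-k}(\tau).
\]
Similarly, matching the $q^7$, $q^{15}$, and $q^{23}$ coefficients of \eqref{eq:r-31-basis-exp}, using the known initial values $a_{-7}(7)=1$, $a_{-7}(15)=7$, $a_{-7}(23)=21$, $a_{-15}(15)=1$, and $a_{-15}(23)=15$, gives
\begin{align*}
c_0 &\equiv a_{-31}(7\ell^2) \pmod{16},\\
c_1 &\equiv a_{-31}(15\ell^2) - 7c_0 \pmod{16},\\
c_2 &\equiv a_{-31}(23\ell^2) - 21c_0 - 15c_1 \pmod{16}.
\end{align*}

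To evaluate these, I would use the factorizations $R^{-15} = R^{-7}\psi(8\tau)$ and $R^{-31} = R^{-7}\psi(8\tau)^3$ to express $a_{-15}(n)$ and $a_{-31}(n)$ as convolutions of $a_{-7}$ against the Fourier coefficients of $\psi$. The Hecke eigenform relation produces the closed forms
\[
a_{-7}(7\ell^2) = \ell^5 + 1 - \pfrac{-7}{\ell}\ell^2, \qquad a_{-7}(k\ell^2) = a_{-7}(k)\left(\ell^5 + 1 - \pfrac{-k}{\ell}\ell^2\right) \text{ for } \gcd(k,\ell)=1,
\]
and these can be substituted termwise into the convolution expansions. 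A short calculation then reduces $d_0 \pmod 4$ and $c_0, c_1, c_2 \pmod 2$ to explicit polynomial expressions in $\ell$ together with Legendre symbols of small integers against $\ell$.

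The four parts of the proposition then follow from a case-by-case $2$-adic check. The residue class of $\ell$ modulo $8$ controls the signs $\pfrac{-1}{\ell}$ and $\pfrac{2}{\ell}$, hence the values of $\pfrac{-k}{\ell}$ for small $k$; for example, $\pfrac{2}{\ell} = 1$ exactly when $\ell \equiv \pm 1 \pmod 8$, which is the combined condition of part (4) and forces the parity cancellations in the expressions for $c_1$ and $c_2$. The main obstacle is the mod-$4$ bookkeeping for part (3): the relation $d_0 \equiv 2c_0 \pmod 4$ for $\ell \equiv 7 \pmod 8$ is not visible termwise in the convolutions, but emerges only after collecting the Hecke-eigenvalue substitutions in $a_{-15}(7\ell^2)$ and $a_{-31}(7\ell^2)$ and tracking the contribution of the leading coefficients of $\psi(8\tau)$ and $\psi(8\tau)^3$ modulo $4$.
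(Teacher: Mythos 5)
Your setup is consistent with the paper: identifying $d_{0},c_{0},c_{1},c_{2}$ with the coefficients $a_{15}(7\ell^{2})$, $a_{31}(7\ell^{2})$, $a_{31}(15\ell^{2})$, $a_{31}(23\ell^{2})$ (up to the correction terms you record, which agree with the paper's modulo $16$) is exactly the right first step. The gap is in the evaluation step. You propose to compute, say, $a_{15}(7\ell^{2})$ by writing $R^{-15}=R^{-7}\psi(8\tau)$, expanding $a_{15}(7\ell^{2})=\sum_{i+j=7\ell^{2}}a_{7}(i)\,b(j)$, and then ``substituting termwise'' the eigenform identity $a_{7}(k\ell^{2})=\bigl(\ell^{5}+1-\pfrac{-k}{\ell}\ell^{2}\bigr)a_{7}(k)$. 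But that identity only evaluates $a_{7}$ at arguments divisible by $\ell^{2}$, whereas the convolution runs over all decompositions $i+j=7\ell^{2}$ with $i$ arbitrary; there is no termwise substitution available. Relatedly, $a_{15}(7\ell^{2})$ and $a_{31}(t\ell^{2})$ cannot reduce to ``explicit polynomial expressions in $\ell$ together with Legendre symbols'': $R^{-15}$ and $R^{-31}$ are not Hecke eigenforms, and the whole content of the proposition is to control the error terms measuring that failure. If your plan worked it would show these forms are essentially eigenforms, which is false.

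The missing idea is a device that isolates the single coefficient $a_{r}(t\ell^{2})$ inside an integral-weight modular form. The paper uses the Shimura lift $\Shim_{t}$: since $a_{r}(t)=0$ in each relevant case, $a_{r}(t\ell^{2})$ is precisely the coefficient of $q^{\ell}$ in the integral-weight cusp form $\Shim_{t}R^{-r}$ (computed via $\Shim_{t}\rho_{8}R^{-r}\equiv\Shim_{t}R^{-r}\pmod 4$ to stay in a space of cusp forms). These lifts are then written explicitly as polynomials in $F=\sum\sigma_{1}(2n+1)q^{2n+1}$ and $\vartheta_{0}$, reduced modulo $2$ or $4$, and shown to be supported on residue classes modulo $8$ that exclude $\ell$; the support claims themselves are not automatic and require the character twists $\tfrac{1}{2}\bigl(F^{3}+F_{\chi_{-4}}^{3}\bigr)$ and $\tfrac{1}{2}\bigl(f-f_{\chi_{-8}}\bigr)$ together with a finite coefficient check. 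None of this machinery is present in, or replaceable by, the convolution argument you outline, so the proof as proposed does not go through.
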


\begin{proof} By \eqref{eq:hecke-def}, \eqref{eq:r-31-basis-exp}, \eqref{eq:r-15-basis-exp}, and the fact that
	$R^{-7}\rho_{8}^{3} = q^{7} - 41q^{15} + 789q^{23} + O(q^{31})$ and 
	$R^{-15}\rho_{8}^{2} = q^{15} - 17q^{23} + O(q^{31})$,
the coefficients $d_{0},c_{0},c_{1},c_{2}$ are given by
\begin{gather*}
	d_{0} = a_{15}(7\ell^{2}), \hspace{.2in} c_{0} = a_{31}(7\ell^{2}), \hspace{.2in} 
	c_{1} = a_{31}(15\ell^{2}) + 41c_{0}, \hspace{.2in} c_{2} = a_{31}(23\ell^{2}) + 17c_{1} - 789c_{0},
\end{gather*}
where 
\[
	\sum_{n=1}^{\infty}a_{r}(n) q^{n} := R^{-r}(\tau) = \left( \sum_{n=0}^{\infty}q^{(2n+1)^{2}} \right)^{r}.
\]

For $f(\tau) = \sum_{n=1}^{\infty} a(n)q^{n} \in M_{k+1/2}(\Gamma_{0}(4N))$ we define the $t^{\text{th}}$ Shimura lift  $\Shim_{t}$ by
\begin{equation} \label{eq:shimura-def}
	\Shim_{t}f(\tau) = \sum_{n=1}^{\infty}\left(\sum_{d|n} d^{k-1}\pfrac{(-1)^{k}4t}{d}a\left(|t|n^{2}/d^{2}\right) \right)q^{n}.
\end{equation}
If $f\in S_{k+1/2}(\Gamma_{0}(4N))$, then $\Shim_{t}f \in S_{2k}(\Gamma_{0}(2N))$ (see, for instance,~\cite[Section~3.3]{Ono:web}). 
Now, $\rho\,\psi$ is a cusp form since $\psi$ vanishes at $\infty$ and $\rho$ vanishes at $0$. 
We compute $\Shim_{t} \rho_{8} R^{-r}$, which is a cusp form.  
Since $\rho \equiv 1\pmod{4}$ we have
\[
	\Shim_{t}\rho_{8}R^{-r} \equiv \Shim_{t}R^{-r} \pmod{4}.
\]
	We will now compute the lifts $\Shim_{t}$ and write the resulting functions in terms of $F$ and $\vartheta_{0}$, where
	\[
		F := \sum_{n=0}^{\infty}\sigma_{1}(2n+1) q^{2n+1} \in M_{2}(\Gamma_{0}(4)), \hspace{.2in} \vartheta_{0} := \sum_{n=-\infty}^{\infty}q^{n^{2}} \in M_{\frac{1}{2}}(\Gamma_{0}(4)).
	\]
	
(1)
Suppose $\ell\equiv 1\pmod 4$. We compute sufficiently many coefficients to determine that
\begin{align*}
	\Shim_{7}\rho_{8}R^{-15} &= -1287 F^{3} \vartheta_{0}^{32} + 2^{6}\cdot 1287 F^{4} \vartheta_{0}^{28} - 2^{8}\cdot 6721 \, F^{5} \vartheta_{0}^{24} + 2^{12} \cdot 2145 \, F^{6} \vartheta_{0}^{20} \\
	&+ 2^{16}  \cdot 1859 \, F^{7}\vartheta_{0}^{16} - 2^{20}\cdot 1287 \, F^{8} \vartheta_{0}^{12} + 2^{24}\cdot 143 \, F^{9} \vartheta_{0}^{8}.
\end{align*}
Since $\vartheta_{0}(\tau) \equiv 1\pmod 2$, we conclude that
\[
	\Shim_{7}R^{-15} \equiv \Shim_{7}\rho_{8}R^{-15} \equiv  F^{3} \pmod{4}.
\]
Now, $a_{15}(7\ell^{2})$ is the coefficient of $q^{\ell}$ in $\Shim_{7}R^{-15}$. We must show that the form $F^{3} \pmod{4}$ is supported on exponents which are $3 \pmod{4}$. Let $\chi_{-4}:=\pfrac{-4}{\bullet}$, and let $F_{\chi_{-4}}$ denote the twist of $F$ by $\chi_{-4}$. Then
\[
	\tilde{F} := \frac{1}{2}\left( F^{3} + F^{3}_{\chi_{-4}} \right) \in M_{2}(\Gamma_{0}(16)),
\]
and by computing sufficiently many coefficients we find that $\tilde{F} \equiv 0 \pmod{4}$.

(2) 
Suppose $\ell\equiv 1\pmod{8}$. We compute that
\begin{align}
		\Shim_{7}\rho_{8}R^{-31} &= \nonumber
	-693\,F^{3}\vartheta_{0}^{64} 
	+2^{7}\cdot 693\,F^{4}\vartheta_{0}^{60}
	-14158837\,F^{5}\vartheta_{0}^{56} 
	+2^{4}\cdot 74274739\,F^{6}\vartheta_{0}^{52} \\
	&-45253573295\,F^{7}\vartheta_{0}^{48} \nonumber
	+2^{5}\cdot 20433347725\,F^{8}\vartheta_{0}^{44} 
	+2^{8}\cdot 29560308687\,F^{9}\vartheta_{0}^{40} \\
	&-2^{14}\cdot 28133747817\,F^{10}\vartheta_{0}^{36} \nonumber
	+2^{15}\cdot 250545162231\,F^{11}\vartheta_{0}^{32} 
	-2^{23}\cdot 9410428671\,F^{12}\vartheta_{0}^{28} \\
	&+2^{23}\cdot 53378995173\,F^{13}\vartheta_{0}^{24} \nonumber
	-2^{27}\cdot 11400290027\,F^{14}\vartheta_{0}^{20} 
	+2^{32}\cdot 697257169\,F^{15}\vartheta_{0}^{16} \\
	&-2^{35}\cdot 43328593\,F^{16} \vartheta_{0}^{12} 
	-2^{40}\cdot 122815\,F^{17}\vartheta_{0}^{8}, \label{eq:shim-r-31}
\end{align}
and therefore
\[
	\Shim_{7}R^{-31} \equiv F^{3} + F^{5} + F^{7} \pmod{2}.
\]
Notice that $F\equiv \sum_{n\geq0}q^{(2n+1)^{2}} \pmod{2}$, so the the form $F^{3} + F^{5} + F^{7} \pmod{2}$ is supported only on exponents which are $3,5,7 \pmod{8}$. The quantity $c_{0} = a_{31}(7\ell^{2})$ is the coefficient of $q^{\ell}$ in $\Shim_{7}R^{-31}$, therefore $c_{0}\equiv 0\pmod{2}$.

(3) 
Suppose $\ell\equiv 7\pmod{8}$. From \eqref{eq:shim-r-31} we have
\[
	\Shim_{7}R^{-31} \equiv 3F^{3} + 3F^{5} + F^{7} \pmod{4}.
\]
The quantity $d_{0} - 2c_{0} = a_{15}(7\ell^{2})-2a_{31}(7\ell^{2})$ is the coefficient of $q^{\ell}$ in 
\[
	\Shim_{7}R^{-15} - 2 \, \Shim_{7}R^{-31} \equiv 3F^{3}+2F^{5}+2F^{7} \pmod{4}.
\]
Since $F\equiv \sum_{n\geq0}q^{(2n+1)^{2}} \pmod{2}$, the form $2F^{5} \pmod{4}$ is supported on exponents which are $5 \pmod{8}$.
We must show that the form $3F^{3}+2F^{7} \pmod{4}$ is supported on exponents which are 1 or $3\pmod{8}$. Then, since $\ell\equiv 7\pmod{8}$, it will follow that $d_{0} - 2c_{0}\equiv 0\pmod{4}$.

Define
\[
	f := 3F^{3}\vartheta_{0}^{16} + 2F^{7} \in M_{14}(\Gamma_{0}(4)).
\]
Then $f \equiv 3F^{3}+2F^{7} \pmod{4}$. Let $\chi_{-8}:=\pfrac{-8}{\bullet}$, and let $f_{\chi_{-8}}$ denote the twist of $f$ by $\chi_{-8}$. Then
\[
	\tilde{f} := \frac{1}{2} \left( f - f_{\chi_{-8}} \right) \in M_{14}(\Gamma_{0}(64)),
\]
and by computing sufficiently many coefficients we find that $\tilde{f} \equiv 0 \pmod{4}$.

(4)
Suppose $\ell\equiv 1,7 \pmod{8}$. To show that $c_{1}$ and $c_{2}$ are even, we will show
\begin{gather}
	a_{31}(15\ell^{2}) + a_{31}(7\ell^{2}) \equiv 0 \pmod{2}, \label{eq:a-15-7} \\
	a_{31}(23\ell^{2}) + a_{31}(15\ell^{2}) \equiv 0 \pmod{2}. \label{eq:a-23-15}
\end{gather}
The left-hand side of \eqref{eq:a-15-7} is the coefficient of $q^{\ell}$ in $\Shim_{15}R^{-31} + \Shim_{7}R^{-31}$. As before, we compute that
%\begin{align*}
%		\Shim_{15}\rho_{8}R^{-31} &=  
%	298485\,F^{3}\vartheta_{0}^{64} 
%	-2^{7}\cdot 298485\,F^{4}\vartheta_{0}^{60} 
%	-2^{2}\cdot 3962799315\,F^{5}\vartheta_{0}^{56} \\
%	&+2^{6}\cdot 30414020805\,F^{6}\vartheta_{0}^{52}
%	-92725849990665\,F^{7}\vartheta_{0}^{48} 
%	+2^{5}\cdot 72181632135195\,F^{8}\vartheta_{0}^{44} \\
%	&-2^{8}\cdot 118896319305495\,F^{9}\vartheta_{0}^{40} 
%	+2^{14}\cdot 8016158039265\,F^{10}\vartheta_{0}^{36} 
%	+2^{15}\cdot 62389437277185\,F^{11}\vartheta_{0}^{32} \\
%	&-2^{23}\cdot 4398928685145\,F^{12}\vartheta_{0}^{28} 
%	+2^{23}\cdot 29260729185795\,F^{13}\vartheta_{0}^{24} 
%	-2^{27}\cdot 4735641133485\,F^{14}\vartheta_{0}^{20} \\
%	&-2^{32}\cdot 29174357385\,F^{15}\vartheta_{0}^{16} 
%	+2^{35}\cdot 16644473865\,F^{16}\vartheta_{0}^{12} 
%	+2^{40}\cdot 171590535\,F^{17}\vartheta_{0}^{8},
%\end{align*}
%and therefore
\[
	\Shim_{15}R^{-31} + \Shim_{7}R^{-31} \equiv F^{5} \pmod{2}.
\]
Since $\ell\equiv 1,7 \pmod{8}$ and the form $F^{5} \pmod{2}$ is supported on exponents which are $5 \pmod{8}$, congruence \eqref{eq:a-15-7} is true. Similarly, the left-hand side of \eqref{eq:a-23-15} is the coefficient of $q^{\ell}$ in $\Shim_{23}R^{-31} + \Shim_{15}R^{-31}$. We compute that
%\begin{align*}
%	\Shim_{23}\rho_{8}R^{-31} &=
%	2\cdot 14832855\,F^{3}\vartheta_{0}^{64} 
%	-2^{8}\cdot 14832855\,F^{4}\vartheta_{0}^{60} 
%	+2^{5}\cdot 456750495\,F^{5}\vartheta_{0}^{56} \\
%	&+2^{9}\cdot 30028341735\,F^{6}\vartheta_{0}^{52}
%	-1014480978465627\,F^{7}\vartheta_{0}^{48} \\
%	&+2^{5}\cdot 1090654496049681\,F^{8}\vartheta_{0}^{44} 
%	-2^{8}\cdot 2863653085272069\,F^{9}\vartheta_{0}^{40} \\
%	&+2^{14}\cdot 509797614992355\,F^{10}\vartheta_{0}^{36} 
%	-2^{15}\cdot 306054678362301\,F^{11}\vartheta_{0}^{32} \\
%	&-2^{23}\cdot 126921380917707\,F^{12}\vartheta_{0}^{28} 
%	+2^{23}\cdot 1654189143399369\,F^{13}\vartheta_{0}^{24} \\
%	&-2^{27}\cdot 531653431101255\,F^{14}\vartheta_{0}^{20} 
%	+2^{32}\cdot 27545778312741\,F^{15}\vartheta_{0}^{16} \\
%	&+2^{35}\cdot 2222078882139\,F^{16}\vartheta_{0}^{12} 
%	-2^{40}\cdot 16347465387\,F^{17}\vartheta_{0}^{8},
%\end{align*}
%and therefore
\[
	\Shim_{23}R^{-31} + \Shim_{15}R^{-31} \equiv F^{3} \pmod{2}.
\]
Since $\ell\equiv 1,7 \pmod{8}$ and the form $F^{3} \pmod{2}$ is supported only on exponents which are $3 \pmod{8}$, congruence \eqref{eq:a-23-15} is true.
\end{proof}
\end{section}

\bibliographystyle{plain}
\bibliography{bibliography.bib}

\end{document}